\pdfoutput=1
\RequirePackage{ifpdf}
\ifpdf % We are running pdfTeX in pdf mode
\documentclass[pdftex]{sigma}
\else
\documentclass{sigma}
\fi

\usepackage{enumerate}
\usepackage[all]{xy}

\numberwithin{equation}{section}

\newtheorem{THM}{Theorem}[section]

\newtheorem{LEM}[THM]{Lemma}
\newtheorem{PROP}[THM]{Proposition}
\newtheorem{QUE}[THM]{Question}
 { \theoremstyle{definition}
\newtheorem{DEF}[THM]{Definition}

\newtheorem{REM}[THM]{Remark}}

\newcommand{\ra}{\rightarrow}
\newcommand{\LRa}{\Longrightarrow}
\newcommand{\lra}{\longrightarrow}
\newcommand{\p}{\prime}
\newcommand{\Zbb}{\mathbb{Z}}
\newcommand{\Pbb}{\mathbb{P}}
\newcommand{\Cbb}{\mathbb{C}}
\newcommand{\al}{\alpha}
\newcommand{\om}{\omega}
\newcommand{\lam}{\lambda}
\newcommand{\be}{\beta}
\newcommand{\dt}{\delta}
\newcommand{\bt}{\bullet}
\newcommand{\Aut}{\operatorname{Aut}}
\newcommand{\Ac}{\mathcal{A}}
\newcommand{\Oc}{\mathcal{O}}
\newcommand{\Jc}{\mathcal{J}}
\newcommand{\Gc}{\mathcal{G}}
\newcommand{\Xfr}{\mathfrak{X}}
\newcommand{\Ec}{\mathcal{E}}
\newcommand{\Qcl}{\mathcal{Q}}

\begin{document}
%\allowdisplaybreaks

\newcommand{\arXivNumber}{1804.06366}

\renewcommand{\PaperNumber}{094}

\FirstPageHeading

\ShortArticleName{Higher Obstructions for Complex Supermanifolds}

\ArticleName{Higher Obstructions of Complex Supermanifolds}

\Author{Kowshik BETTADAPURA}

\AuthorNameForHeading{K.~Bettadapura}

\Address{Yau Mathematical Sciences Center, Tsinghua University, Haidian, Beijing, 100084, China}
\Email{\href{mailto:kowshik@mail.tsinghua.edu.cn}{kowshik@mail.tsinghua.edu.cn}}

\ArticleDates{Received April 29, 2018, in final form August 30, 2018; Published online September 07, 2018}

\Abstract{In this article we introduce the notion of a `good model' in order to study the higher obstructions of complex supermanifolds. We identify necessary and sufficient conditions for such models to exist. Illustrations over Riemann surfaces are provided.}

\Keywords{complex supergeometry; supermanifolds; obstruction theory}
\Classification{32C11; 58A50}

\section{Introduction}

Complex supermanifolds are spaces modelled on the data of a complex manifold $X$ and holomorphic vector bundle $E\rightarrow X$. Accordingly, we refer to the pair $(X, E)$ as a `model'. Supermanifolds can be either split or non-split. A splitting is a global isomorphism to some fixed model space. Berezin in \cite[pp.~163--169]{BER} observed that to any complex supermanifold, there will be associated a hierarchy of inductively defined cohomology classes representing obstructions to the existence of a splitting.\footnote{In the terminology of Berezin, a splitting is referred to as a `retraction' and a split supermanifold is referred to as `simple'.} These classes are called \emph{obstruction classes to splitting}, or simply `obstructions'. To any supermanifold one can always define its \emph{primary} obstruction. If this vanishes, another will appear in its place, sitting at a higher degree (in a suitable sense). Accordingly, these classes are referred to as `higher obstructions'.

Donagi and Witten in \cite{DW1} observed that the higher obstructions to splitting a supermani\-fold~$\mathfrak X$ might fail to represent a genuine obstruction to splitting~$\mathfrak X$. That is, it might well be that $\mathfrak X$ is split even if it supports an atlas in which a higher obstruction to the existence of a splitting does not vanish. Such a phenomenon seems to be difficult to illustrate in practice. For instance, it was shown by the author in~\cite{BETTSRSDEF} that certain deformations of super Riemann surfaces will \emph{not} support such atlases.

In this article we are motivated by the following question:
\begin{gather}
	 \text{\emph{When does a higher obstruction to splitting $\mathfrak X$ represent a genuine obstruction?}}\label{star}
\end{gather}
Following the terminology of Donagi and Witten, if a higher obstruction is not genuine it is referred to as \emph{exotic}.

We summarise the main ideas and results in this article below.

\subsection{Article summary}
Obstruction classes are associated to supermanifolds, while obstruction spaces are associated to models. In our attempt to address the question in~\eqref{star}, we will look to classify models $(X, E)$. In Definition \ref{rh89hf89jf093j9f0fj03} we introduce the notion of a `good model' $(X, E)$. This is a model on which every higher obstruction for any supermanifold modelled on $(X, E)$ will be genuine. Lemma~\ref{futhirhhiofoirjfoi} clarifies the relation to the splitting problem. Our main result is Theorem~\ref{ruihf8h98f983jfj30f} where we obtain necessary and sufficient conditions for $(X, E)$ to be a good model.

Section \ref{rfh94hf98hf984jf9j90k3} is devoted to illustrations of models which are good and otherwise. We restrict our attention to holomorphic vector bundles on Riemann surfaces. Over the projective line $\mathbb P^1_{\mathbb C}$, when the bundle has rank $3$, a general characterisation of good models is derived in Theorem~\ref{rfh794hf984f98j}. It is based essentially on an example by Donagi and Witten in~\cite{DW1}. This is then extended to higher rank for a particular class of bundles on $\mathbb P^1_{\mathbb C}$ in Theorem~\ref{hf78hf984f8j40f094}.

The relation of higher obstructions to the splitting problem is subtle. In Appendix~\ref{rgf784g87hf98j3fj0j9f33} we submit a proof of the assertion in Theorem~\ref{4fh984fj8jf093j303kf}, being: if the primary obstruction of a~supermanifold $\mathfrak X$ does not vanish, then $\mathfrak X$ is non-split. There is no reason for the analogous statement for higher obstructions to hold. It is for this reason that we introduce variants of splitting such as `strong splitting' and `weak non-splitting' in Definition~\ref{rfh974hf9h389fj3}.

\section{Complex supermanifolds}\label{rfh978f9h3f98hf983}

There are a number of different categories of supermanifolds and each comes with its own level of subtlety as detailed in \cite{BRUZZO}. Those considered in this article fall under the umbrella of the `algebro-geometric' supermanifolds. These are defined as locally ringed spaces which are modelled on a~given space. Standard reference include \cite{QFAS, YMAN}. One can however bypass that language and give an equivalent definition in terms of \v{C}ech cohomology sets. This is the approach we consider here since it is more convenient for the purposes of this article.

\subsection{Green's automorphism groups}
Let $X$ be a complex manifold and $E\rightarrow X$ a holomorphic vector bundle. Throughout this article we will refer to the pair $(X, E)$ as a~\emph{model}. Denote by $\Ec$ the sheaf of local sections of~$E$. We can then form the sheaf of exterior algebras~$\wedge^\bt \Ec$ on~$X$.

\begin{REM}\label{rhf97hf93h8f9j3fj0}
Note that $\wedge^\bt \Ec$ is a sheaf of $\Zbb_2$-graded, supercommutative algebras. It also admits a compatible $\Zbb$-grading. We will however only consider automorphisms of $\wedge^\bt \Ec$ which preserve the $\Zbb_2$-grading. The sheaf of such automorphisms will be denoted $\operatorname{{\mathcal A}ut}_{\Zbb_2}(\wedge^\bt \Ec)$. However we will suppress the subscript `$\Zbb_2$' so as to avoid cumbersome notation and simply write $\operatorname{{\mathcal A}ut}\wedge^\bullet \Ec$.
\end{REM}

Let $\Jc_{(X, E)}\subset \wedge^\bt \Ec$ be the ideal generated by local sections of $\Ec$. As an $\Oc_X$-module, $\Jc_{(X, E)} = \oplus_{j>0}\wedge^j\Ec$. The following sheaf of non-abelian groups were defined by Green in \cite{GREEN}:
\begin{gather}
\Gc_{(X, E)}^{(k)}:=\big\{\al\in \operatorname{{\mathcal A}ut}\wedge^\bt\Ec \,|\, \al(u) - u\in \Jc_{(X, E)}^k \big\}.\label{rhf4hf98498jf8039f33f}
\end{gather}
The \v{C}ech cohomology of $X$ valued in $\Gc_{(X, E)}^{(k)}$ is of particular importance. We present now our working definition of a supermanifold.

\begin{DEF}\label{rfh89hf98hf8030}A \emph{supermanifold modelled on $(X, E)$} is an element of the \v{C}ech cohomology set $\check{H}^1\big(X, \Gc_{(X, E)}^{(2)}\big)$.
\end{DEF}

Generally $\check{H}^1(X, G)$, for $G$ a sheaf of non-abelian groups on $X$, will be a pointed set. The basepoint corresponds to the identity section of $G$. That is, under the inclusion $\{e\}\ra G$, for $\{e\}$ the trivial group, one obtains a map $\check{H}^1(X,\{e\}) \ra \check{H}^1(X,G)$ of pointed sets. Now $\check{H}^1(X,\{e\})$ is a one-point set. The basepoint in $\check{H}^1(X,G)$ is the image of this one-point set.

\begin{DEF}\label{rhrhf8938fj30fj03}
The basepoint in $\check{H}^1\big(X, \Gc_{(X, E)}^{(2)}\big)$ is referred to as \emph{the split supermanifold} model\-led on~$(X, E)$, or simply the \emph{split model}.
\end{DEF}

The 1-cohomology set $\check{H}^1\big(X, \Gc_{(X, E)}^{(2)}\big)$ will be referred to as the \emph{set of supermanifolds modelled on~$(X, E)$}.

\subsection{Isomorphisms of supermanifolds}
We will make use of an early observation of Grothendieck in \cite{GROTHNONAB}, discussed also in \cite[p.~160]{BRY}, regarding sheaves of non-abelian groups.

\begin{THM}\label{groeiveytvdbno}
Let $X$ be a $($paracompact$)$ topological space and suppose the following is a short exact sequence of sheaves of $($not necessarily abelian$)$ groups on~$X$
\begin{gather*}
e\lra A\lra B\lra C\lra e.
\end{gather*}
Then there exists a long exact sequence
\begin{align}
e & \lra H^0(X, A) \lra H^0(X, B) \lra H^0(X, C)\label{rhf894h98ffj093j03}\\
&\lra \check{H}^1(X, A) \lra\check{H}^1(X, B)\lra\check{H}^1(X, C),\label{kfjvbtbrnnvoir}
\end{align}
where the sequence in \eqref{rhf894h98ffj093j03} is as groups whereas that in \eqref{kfjvbtbrnnvoir} is as pointed sets. Furthermore, there exists an action of $H^0(X, C)$ on the set $\check{H}^1(X, A)$ such that the following diagram commutes
\begin{gather*}
\xymatrix{\ar[dr]\check{H}^1(X, A) \ar[rr] & & \check{H}^1(X, B)\\
& \check{H}^1(X, A)/H^0(X, C).\ar@{-->}[ur]&}
\end{gather*}
\end{THM}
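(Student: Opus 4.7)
The plan is to follow the standard cocycle-theoretic construction for nonabelian sheaf cohomology, with paracompactness of $X$ used repeatedly to pass to common refinements and to produce local lifts. The nonabelianness forces a pointed-set/orbit-space formulation on the right end, but all the verifications reduce to short identities in the groups $A$, $B$, $C$.

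First I would handle the $H^0$ portion. Exactness of $e\lra H^0(X,A)\lra H^0(X,B)\lra H^0(X,C)$ as groups is immediate from the fact that $A$ is the sheaf-theoretic kernel of $B\ra C$, passed to global sections. I would then define the connecting map $\dt\colon H^0(X,C)\ra\check{H}^1(X,A)$ as follows: for $c\in H^0(X,C)$, stalkwise surjectivity of $B\ra C$ together with paracompactness yields an open cover $\{U_i\}$ and local lifts $b_i\in B(U_i)$ of $c|_{U_i}$. On $U_i\cap U_j$ the product $b_ib_j^{-1}$ has trivial image in~$C$, so it arises from a unique $a_{ij}\in A(U_i\cap U_j)$. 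The cocycle identity is immediate, and replacing the lifts $b_i$ alters $(a_{ij})$ by an $A$-coboundary.

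Next I would verify exactness at the remaining three positions. Exactness at $H^0(X,C)$ amounts to the statement that $\dt(c)$ is trivial iff the local lifts $b_i$ can be modified by sections of~$A$ to glue into a global section of $B$ mapping to $c$. Exactness at $\check{H}^1(X,A)$ reduces to the observation that an $A$-cocycle is a $B$-coboundary iff its entries factor as $b_ib_j^{-1}$ for local sections of $B$, whose $C$-images then glue to a global section of $C$ whose $\dt$-image is the original class. Exactness at $\check{H}^1(X,B)$ is the most involved step: given a $B$-cocycle $(b_{ij})$ whose $C$-image is a coboundary $c_ic_j^{-1}$, I would, after refining, choose $B$-lifts $\tilde b_i$ of $c_i$; then $\tilde b_i^{-1}b_{ij}\tilde b_j$ has trivial image in $C$, hence takes values in $A$, and is cohomologous to $(b_{ij})$ as a $B$-cocycle.

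Finally I would define the action of $H^0(X,C)$ on $\check{H}^1(X,A)$. For $c\in H^0(X,C)$ with local lifts $b_i\in B(U_i)$ and an $A$-cocycle $(a_{ij})$ on a common refinement, I would set $(c\cdot a)_{ij}:=b_i^{-1}a_{ij}b_j$. The $C$-image is $c^{-1}\cdot e\cdot c=e$, so these entries lie in~$A$; the cocycle identity is routine; and replacing $b_i$ by $a_ib_i$ with $a_i\in A(U_i)$ alters the result by an $A$-coboundary, so the action descends to $\check{H}^1(X,A)$. The identity $b_i^{-1}a_{ij}b_j=b_i^{-1}\cdot a_{ij}\cdot b_j$ shows $(c\cdot a)$ and $(a)$ are cohomologous as $B$-cocycles, so the map $\check{H}^1(X,A)\ra\check{H}^1(X,B)$ is constant on $H^0(X,C)$-orbits. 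Conversely, if two $A$-cocycles $(a_{ij})$, $(a_{ij}')$ are cohomologous as $B$-cocycles via $(b_i)$, then the $C$-images of the $b_i$ agree on overlaps and glue to a global $c\in H^0(X,C)$ with $c\cdot[(a_{ij})]=[(a_{ij}')]$, yielding the desired factorisation through the quotient $\check{H}^1(X,A)/H^0(X,C)$. The principal obstacle is essentially bookkeeping: fixing a single cover on which all cocycles, coboundaries and local lifts are simultaneously defined, and checking that every construction is invariant under the choices made.
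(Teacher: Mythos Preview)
Your argument is the standard \v{C}ech-cocycle construction and is correct as outlined; the verifications you sketch (well-definedness of $\dt$, exactness at each node, well-definedness and compatibility of the $H^0(X,C)$-action) all go through as you indicate. One small remark: paracompactness is not really what is driving the individual steps you mention (local lifts come from stalkwise surjectivity of $B\ra C$ and the sheaf axiom, and common refinements exist over any space); its role is rather to ensure that the limit \v{C}ech sets are well-behaved and that one may pass freely between covers, so you could sharpen the phrasing there.

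As for comparison with the paper: the paper does not supply its own proof of this theorem. It is quoted as an established result, attributed to Grothendieck \cite{GROTHNONAB} and with a pointer to Brylinski \cite[p.~160]{BRY} for an exposition. What you have written is exactly the argument one finds in those sources, so there is no substantive divergence to discuss; you have simply filled in what the paper chose to cite.
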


We have so far defined supermanifolds as elements of a certain \v{C}ech cohomology set. As for when two supermanifolds are isomorphic, this is defined as follows. Firstly observe for $k = 2$ the following short exact sequence of sheaves of groups
\begin{gather}
\{e\} \lra \Gc^{(2)}_{(X, E)} \lra \operatorname{{\mathcal A}ut}\wedge^\bt\Ec \lra \operatorname{{\mathcal A}ut} \Ec\lra\{e\}.\label{hf984jf8j093f0393}
\end{gather}
Applying Theorem \ref{groeiveytvdbno} gives a long exact sequence on cohomology. Consider the following piece
\begin{align}
\cdots\lra H^0\big(X, \operatorname{{\mathcal A}ut} \Ec\big) \lra \check{H}^1\big(X, \Gc^{(2)}_{(X, E)} \big) \lra \check{H}^1\big(X, \operatorname{{\mathcal A}ut}\wedge^\bt\Ec\big) \lra\cdots.
\label{rjf984jf98jf09jf90f930}
\end{align}
Isomorphisms are defined as follows.

\begin{DEF}\label{rfhhf983fj390f3}Two supermanifolds modelled on $(X, E)$ are \emph{isomorphic} if and only if their image in $\check{H}^1\big(X, \operatorname{{\mathcal A}ut}\wedge^\bt\Ec\big)$ coincide.
\end{DEF}

Note, $H^0(X, \operatorname{{\mathcal A}ut} \Ec)$ coincides with the group $\Aut(E)$ of global automorphisms. From the latter part of Theorem~\ref{groeiveytvdbno} we have an action of the group~$\Aut(E)$ on $\check{H}^1\big(X, \Gc^{(2)}_{(X, E)}\big)$ and a~well-defined map from the orbits to $\check{H}^1\big(X, \operatorname{{\mathcal A}ut}\wedge^\bt\Ec\big)$. This leads to the following result, first described by Green in~\cite{GREEN}, on the general classification of supermanifolds. As per the definitions we have made it follows from Theorem~\ref{groeiveytvdbno}.

\begin{THM}\label{rh9r8h984hf89jf0fj3}There exists a bijective correspondence
\begin{gather*}
\frac{\left\{\text{\rm supermanifolds modelled on $(X, E)$}\right\}}{\text{\rm up to isomorphism}} \cong \frac{\check{H}^1\big(X, \Gc^{(2)}_{(X, E)}\big)}{\Aut(E)}.
\end{gather*}
\end{THM}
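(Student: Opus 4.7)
The plan is to read off the theorem essentially as a formal consequence of the preceding set-up. By Definition \ref{rfh89hf98hf8030} the set of supermanifolds modelled on $(X,E)$ \emph{is} $\check{H}^1(X,\Gc^{(2)}_{(X,E)})$, and by Definition \ref{rfhhf983fj390f3} two supermanifolds are declared isomorphic precisely when they have the same image under the map
\begin{gather*}
\Phi\colon \check{H}^1\big(X,\Gc^{(2)}_{(X,E)}\big)\lra \check{H}^1\big(X,\operatorname{{\mathcal A}ut}\wedge^\bt\Ec\big)
\end{gather*}
appearing in \eqref{rjf984jf98jf09jf90f930}. Thus the isomorphism classes of supermanifolds modelled on $(X,E)$ are in bijection with the image of $\Phi$, and the task reduces to identifying the fibres of $\Phi$ with the orbits of $\Aut(E)$ on $\check{H}^1(X,\Gc^{(2)}_{(X,E)})$.

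First I would apply Theorem \ref{groeiveytvdbno} to the short exact sequence \eqref{hf984jf8j093f0393}. The associated long exact sequence furnishes both the map $\Phi$ and, by the final clause of that theorem, an action of $H^0(X,\operatorname{{\mathcal A}ut}\Ec)$ on $\check{H}^1(X,\Gc^{(2)}_{(X,E)})$ through which $\Phi$ factors. Next I would identify $H^0(X,\operatorname{{\mathcal A}ut}\Ec)$ with the group $\Aut(E)$ of global holomorphic bundle automorphisms; this identification is already noted in the text immediately following \eqref{rjf984jf98jf09jf90f930}. The commutative triangle in Theorem \ref{groeiveytvdbno} then gives a well-defined factorisation
\begin{gather*}
\bar\Phi\colon \check{H}^1\big(X,\Gc^{(2)}_{(X,E)}\big)\big/\Aut(E) \lra \check{H}^1\big(X,\operatorname{{\mathcal A}ut}\wedge^\bt\Ec\big),
\end{gather*}
and the image of $\bar\Phi$ coincides, by construction, with the set of isomorphism classes of supermanifolds modelled on $(X,E)$.

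What remains is to show that $\bar\Phi$ is injective, i.e.\ that two classes in $\check{H}^1(X,\Gc^{(2)}_{(X,E)})$ having the same image in $\check{H}^1(X,\operatorname{{\mathcal A}ut}\wedge^\bt\Ec)$ necessarily lie in a common $\Aut(E)$-orbit. This is the only non-formal step: it is the standard non-abelian analogue of exactness at the $\check H^1(X,A)$ term in Theorem \ref{groeiveytvdbno}, which in the non-abelian setting takes exactly the form that the fibres of $\Phi$ are $\Aut(E)$-orbits. I would simply cite this from \cite{GROTHNONAB} or \cite{BRY}, since the theorem quoted already packages the action and the factorisation. The main obstacle is really just being careful about this point, because in the non-abelian setting ``exactness'' of \eqref{kfjvbtbrnnvoir} only asserts that the preimage of the basepoint is a single orbit; the stronger statement needed here — that \emph{every} fibre is a single orbit — is exactly what the commutative triangle of Theorem \ref{groeiveytvdbno} encodes. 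Combining this injectivity with the surjectivity of $\bar\Phi$ onto the image of $\Phi$ yields the claimed bijection.
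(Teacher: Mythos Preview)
Your approach is correct and is exactly the paper's: the paper gives no detailed proof, stating only that the result ``follows from Theorem~\ref{groeiveytvdbno}'' together with the definitions, which is precisely the reduction you carry out. One small caution: the commutative triangle in Theorem~\ref{groeiveytvdbno} as literally stated only records the factorisation (orbits have equal image), not the injectivity of the dashed arrow, so your final sentence slightly over-reads it; but you already handle this correctly earlier by citing \cite{GROTHNONAB,BRY} for the standard fact that the fibres of $\check{H}^1(X,A)\to\check{H}^1(X,B)$ are exactly the $H^0(X,C)$-orbits.
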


\subsection{Obstruction theory}
Recall that in Definition \ref{rhrhf8938fj30fj03} we termed the basepoint in $\check{H}^1\big(X, \Gc^{(2)}_{(X, E)}\big)$ the \emph{split model}. With the notion of isomorphism in Definition~\ref{rfhhf983fj390f3} we have:

\begin{DEF}A supermanifold if said to be \emph{split} if and only if it is isomorphic to the split model.
\end{DEF}

Obstruction theory for supermanifolds is concerned with the following question which is referred to as \emph{the splitting problem}:

\begin{QUE}\label{rfiufh498fj894fj0409}Given a supermanifold, how can one tell if it is split?
\end{QUE}

The splitting problem has a long history dating back to its original formulation by Berezin in \cite{BER}, framed using the term `retraction'. Examples of non-split supermanifolds were provided by Berezin and contemporaries including Palamodov in \cite{PALAM}, Green in \cite{GREEN} and Manin in \cite{YMAN}. Outside of mathematical curiosity however, the relevance of the splitting problem to theoretical physics is not so clear. Recent advances by Donagi and Witten in \cite{DW2, DW1} serve to show that non-splitting is a phenomenon that, at least in superstring theory, cannot be ignored. As our starting point in addressing the splitting problem, we have the following result by Green in \cite{GREEN}.

\begin{PROP}\label{rjf984jf894jf90f903k0}
For each $k\geq 2$ there exists a short exact sequence of sheaves of groups
\begin{gather*}
\{e\} \lra \Gc^{(k+1)}_{(X, E)} \lra \Gc^{(k)}_{(X, E)} \lra \Qcl^{(k)}_{(X, E)} \lra \{e\},
\end{gather*}
where the inclusion $\Gc^{(k+1)}_{(X, E)} \ra \Gc^{(k)}_{(X, E)}$ is normal and the factor $\Qcl^{(k)}_{(X, E)}$ is abelian.
\end{PROP}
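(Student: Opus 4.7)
The plan is to realise $\Qcl^{(k)}_{(X,E)}$ explicitly as the cokernel of $\Gc^{(k+1)}_{(X,E)} \hookrightarrow \Gc^{(k)}_{(X,E)}$, and to verify the two non-trivial claims: that this cokernel is abelian, and that $\Gc^{(k+1)}_{(X,E)}$ sits inside $\Gc^{(k)}_{(X,E)}$ as a normal subsheaf. The inclusion $\Gc^{(k+1)} \hookrightarrow \Gc^{(k)}$ itself is immediate from $\Jc^{k+1} \subset \Jc^k$ applied to the defining condition in~\eqref{rhf4hf98498jf8039f33f}.

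The heart of the argument is a multiplicative-to-additive computation. For $\alpha \in \Gc^{(k)}_{(X,E)}$ set $D_\alpha := \alpha - \mathrm{id}$, viewed as a set-theoretic map $\wedge^\bt \Ec \to \Jc^k_{(X,E)}$. For $\alpha_1, \alpha_2 \in \Gc^{(k)}_{(X,E)}$ and any local section $u$,
\begin{gather*}
D_{\alpha_1 \circ \alpha_2}(u) = \alpha_1\bigl(u + D_{\alpha_2}(u)\bigr) - u = D_{\alpha_1}(u) + D_{\alpha_2}(u) + D_{\alpha_1}\bigl(D_{\alpha_2}(u)\bigr).
\end{gather*}
Next I would show that for $v \in \Jc^j_{(X,E)}$ one has $D_{\alpha_1}(v) \in \Jc^{j+k-1}_{(X,E)}$: writing $v$ as a product of $j$ generators from $\Ec \subset \Jc_{(X,E)}$ and using that $\alpha_1$ is a $\Zbb_2$-graded algebra automorphism with $\alpha_1(e) - e \in \Jc^k_{(X,E)}$ for $e \in \Ec$, the difference $\alpha_1(v) - v$ lies in $\Jc^{j-1}_{(X,E)} \cdot \Jc^k_{(X,E)}$. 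Applying this with $v = D_{\alpha_2}(u) \in \Jc^k_{(X,E)}$ yields $D_{\alpha_1}(D_{\alpha_2}(u)) \in \Jc^{2k-1}_{(X,E)}$, which is contained in $\Jc^{k+1}_{(X,E)}$ precisely because $k \geq 2$. Hence, modulo $\Jc^{k+1}_{(X,E)}$,
\begin{gather*}
D_{\alpha_1 \circ \alpha_2} \equiv D_{\alpha_1} + D_{\alpha_2} \pmod{\Jc^{k+1}_{(X,E)}}.
\end{gather*}

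With this in hand, the map $\alpha \mapsto D_\alpha \bmod \Jc^{k+1}_{(X,E)}$ is a homomorphism of sheaves of groups from $\Gc^{(k)}_{(X,E)}$ into an abelian target. I would take $\Qcl^{(k)}_{(X,E)}$ to be the sheafified image; by the same derivation-type calculation on the product rule it is naturally identified with a subsheaf of derivations of $\wedge^\bt \Ec$ valued in $\Jc^k_{(X,E)}/\Jc^{k+1}_{(X,E)}$, though the statement only requires it to be abelian. The kernel is characterised by $D_\alpha(u) \in \Jc^{k+1}_{(X,E)}$ for every $u$, which is exactly the defining condition for $\alpha \in \Gc^{(k+1)}_{(X,E)}$. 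Normality of $\Gc^{(k+1)}_{(X,E)}$ is then automatic as the kernel of a group homomorphism, and the short exact sequence follows.

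The main obstacle is the bookkeeping estimate $D_{\alpha_1}(\Jc^j) \subset \Jc^{j+k-1}$, which is where both the multiplicativity of $\alpha_1$ and the hypothesis $k \geq 2$ are used: for $k=1$ the composition formula would leave a non-vanishing quadratic remainder modulo $\Jc^{2} = \Jc^{k+1}$, the putative quotient would not be abelian, and the normality argument would break down.
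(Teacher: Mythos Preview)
Your argument is correct. The computation $D_{\alpha_1\circ\alpha_2}=D_{\alpha_1}+D_{\alpha_2}+D_{\alpha_1}\circ D_{\alpha_2}$ together with the filtration estimate $D_{\alpha}(\Jc^j)\subset\Jc^{j+k-1}$ is exactly what is needed, and realising $\Qcl^{(k)}_{(X,E)}$ as the image of $\alpha\mapsto D_\alpha\bmod\Jc^{k+1}$ immediately gives both abelianness of the quotient and normality of the kernel. One small expository point: when you write ``$v$ as a product of $j$ generators from~$\Ec$'' you should also allow an $\Oc_X$-coefficient in front, since $\alpha$ need not fix $\Oc_X\subset\wedge^\bt\Ec$; but $\alpha(f)-f\in\Jc^k$ for $f\in\Oc_X$ contributes a term in $\Jc^{k+j}\subset\Jc^{j+k-1}$, so the estimate survives unchanged.

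As for comparison with the paper: there is nothing to compare against. The paper does not prove this proposition but attributes it to Green~\cite{GREEN}, and likewise defers the explicit identification of $\Qcl^{(k)}_{(X,E)}$ (Lemma~\ref{fufh48hf89j4f09j09f3p}) to the same reference. Your argument is the standard one and is essentially what one finds in Green's paper; the derivation-valued description you allude to is precisely what underlies the formula~\eqref{rjf9804jf93jf903jf093j03}.
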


An important utility of Proposition~\ref{rjf984jf894jf90f903k0} is in relating non-abelian cohomology to the cohomology of abelian sheaves. By Theorem~\ref{groeiveytvdbno} the short exact sequence in Proposition~\ref{rjf984jf894jf90f903k0} will induce, for each~$k$, a long exact sequence on cohomology. The piece of most relevance for our present purposes is
\begin{gather}
\cdots\lra \check{H}^1\big(X, \Gc^{(k+1)}_{(X, E)}\big) \lra \check{H}^1\big(X, \Gc^{(k)}_{(X, E)}\big) \stackrel{\om_*}{\lra} H^1\big(X, \Qcl^{(k)}_{(X, E)}\big),\label{rfj894jf90jf903k903k0}
\end{gather}
where in the latter we have used the identification of \v{C}ech cohomology and sheaf cohomology for abelian sheaves.

\begin{DEF}\label{rhf94h98h489fj90fk903k}To any model $(X, E)$ we term the following important constructs for each $k\geq 2$:
\begin{itemize}\itemsep=0pt
	\item the sheaf $\Qcl^{(k)}_{(X, E)}$ in Proposition~\ref{rjf984jf894jf90f903k0} is referred to as the $k$-th \emph{obstruction sheaf};
	\item the cohomology group $H^1\big(X, \Qcl_{(X, E)}^{(k)}\big)$ is referred to as the \emph{$k$-th obstruction space}.
\end{itemize}
\end{DEF}

Inspecting \eqref{rfj894jf90jf903k903k0} shows: there will exist a class in the second obstruction space associated to \emph{any} supermanifold $\Xfr$ modelled on $(X, E)$. We denote this class by $\om_*(\Xfr)$.

\begin{DEF}\label{rjfi4fj04kf0kf03kf3}
Let $\Xfr$ be a supermanifold modelled on $(X, E)$. The class $\om_*(\Xfr)$ in the second obstruction space will be referred to as the \emph{primary obstruction} to splitting $\Xfr$.
\end{DEF}

The terminology in the above definition is justified in the following classical result.

\begin{THM}\label{4fh984fj8jf093j303kf}Let $\Xfr$ be a supermanifold. If its primary obstruction is non-vanishing, then $\Xfr$ is non-split.
\end{THM}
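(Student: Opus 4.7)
The plan is to prove the contrapositive: assuming $\Xfr$ is split, I will show its primary obstruction $\om_*(\Xfr)$ vanishes. Represent $\Xfr$ by a cocycle $\{f_{ab}\} \in Z^1\big(X, \Gc^{(2)}_{(X,E)}\big)$ subordinate to some cover $\{U_a\}$. The hypothesis that $\Xfr$ is isomorphic to the split model (Definitions~\ref{rhrhf8938fj30fj03} and~\ref{rfhhf983fj390f3}) forces the image of this cocycle in $\check{H}^1\big(X, \operatorname{{\mathcal A}ut}\wedge^\bt\Ec\big)$ to be the basepoint. Unwinding the definition of cohomologous cocycles in a non-abelian sheaf, there must exist local sections $g_a \in \operatorname{{\mathcal A}ut}\wedge^\bt\Ec|_{U_a}$ with $f_{ab} = g_a^{-1} g_b$.

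The crux of the argument is to refine the $g_a$ so that the coboundary relation holds already inside $\Gc^{(2)}_{(X,E)}$. For this I invoke the canonical section $\phi \mapsto \wedge^\bt \phi$ of the surjection $\operatorname{{\mathcal A}ut}\wedge^\bt\Ec \to \operatorname{{\mathcal A}ut}\Ec$ appearing in~\eqref{hf984jf8j093f0393}; it is a globally defined group homomorphism obtained by extending $\phi$ to act by $\wedge^k\phi$ on $\wedge^k\Ec$. Let $\phi_a$ denote the image of $g_a$ in $\operatorname{{\mathcal A}ut}\Ec|_{U_a}$. Since $f_{ab}\in \Gc^{(2)}_{(X,E)}$ projects to the identity in $\operatorname{{\mathcal A}ut}\Ec$, one obtains $\phi_a^{-1}\phi_b = 1$, so the $\phi_a$ patch to a global $\phi \in \Aut(E)$. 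Set $h_a := (\wedge^\bt \phi)^{-1}|_{U_a}\cdot g_a$; since $h_a$ projects to the identity in $\operatorname{{\mathcal A}ut}\Ec|_{U_a}$, it lies in $\Gc^{(2)}_{(X,E)}|_{U_a}$. Substituting $g_a = (\wedge^\bt \phi)|_{U_a} \cdot h_a$ into $f_{ab} = g_a^{-1}g_b$ and using that $\wedge^\bt \phi$ is globally defined yields $f_{ab} = h_a^{-1} h_b$, exhibiting $\{f_{ab}\}$ as a coboundary in $\Gc^{(2)}_{(X,E)}$.

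Therefore $\Xfr$ equals the basepoint in $\check{H}^1\big(X, \Gc^{(2)}_{(X,E)}\big)$. Since $\om_*$ is a morphism of pointed sets whose target $H^1\big(X, \Qcl^{(2)}_{(X,E)}\big)$ is an abelian group, it sends the basepoint to $0$, giving $\om_*(\Xfr) = 0$. This contradicts the hypothesis that the primary obstruction is non-vanishing, so $\Xfr$ cannot be split. The principal step is the use of the canonical lift $\wedge^\bt$: it is precisely this global splitting of~\eqref{hf984jf8j093f0393} that absorbs the ambient $\Aut(E)$-action and lets one push a coboundary in $\operatorname{{\mathcal A}ut}\wedge^\bt\Ec$ into $\Gc^{(2)}_{(X,E)}$. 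No analogue exists for the finer filtration $\Gc^{(k)}_{(X,E)} \supset \Gc^{(k+1)}_{(X,E)}$ at higher levels, which foreshadows why the analogous statement fails for higher obstructions.
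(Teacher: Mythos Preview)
Your argument is correct and is in fact cleaner than the one the paper gives. The paper's proof (Appendix~\ref{rgf784g87hf98j3fj0j9f33}) does not exploit the splitting of~\eqref{hf984jf8j093f0393}. Instead, it observes that if $\Xfr$ is split then $\Xfr$ lies in the $\Aut(E)$-orbit of the basepoint, writes $\Xfr = \dt(\phi)$ for some $\phi\in\Aut(E)$, and then argues indirectly that $\om_*\dt(\phi)=0$ by playing off two computations of the $\Cbb^\times\!\cdot 1_E$-action: on the one hand this subgroup fixes $\dt(\phi)$, while on the other it scales $\om_*(\Xfr)$ by $\lam^2$, forcing $\om_*(\Xfr)=0$. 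Your route is more structural: the functorial section $\phi\mapsto\wedge^\bt\phi$ splits~\eqref{hf984jf8j093f0393} as sheaves of groups, so the connecting map $\Aut(E)\to\check H^1\big(X,\Gc^{(2)}_{(X,E)}\big)$ is constant and the $\Aut(E)$-orbit of the basepoint is the basepoint itself. This actually yields a slightly stronger conclusion than the paper records (namely that a split supermanifold \emph{equals} the basepoint in $\check H^1\big(X,\Gc^{(2)}_{(X,E)}\big)$, not merely that its primary obstruction vanishes), and it renders the paper's cautionary remark that ``there is of course no reason for [$\Aut(E)$] to fix the basepoint in general'' moot in this particular instance. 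Your closing comment about the absence of an analogous section at higher levels of the filtration is apt and lines up with the paper's broader theme.
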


\begin{proof}A proof of this theorem, stated using the notion of `retractibility,' can be found in \cite[Theorem 4.6.2, p.~158]{BER}. For completeness, we present a proof based on our formulation of isomorphisms of supermanifolds in Definition \ref{rfhhf983fj390f3}. As the proof is a little involved, we defer it to Appendix~\ref{rgf784g87hf98j3fj0j9f33}.
\end{proof}

Note that Theorem \ref{4fh984fj8jf093j303kf} above cleanly addresses the splitting problem, as posed in Ques\-tion~\ref{rfiufh498fj894fj0409}. If the primary obstruction to splitting vanishes however, it far more subtle to adequately address this problem.

\section{Higher obstructions}

\subsection{Higher atlases}
We are interested in the splitting problem in instances where the primary obstruction vanishes. To that extent we begin with the following string of definitions.

\begin{DEF}An element in $\check{H}^1\big(X, \Gc^{(k)}_{(X, E)}\big)$ is referred to as a \emph{$(k-1)$-split atlas} for a supermanifold modelled on $(X, E)$.
\end{DEF}

\begin{DEF}\label{hf894f894jfj09f30}If a supermanifold $\Xfr$ lies in the image of a $(k-1)$-split atlas $\Xfr^{(k)}$, then $\Xfr$ is said to be \emph{$(k-1)$-split}. Conversely, the image of $\Xfr^{(k)}$ in $\check{H}^1\big(X, \Gc^{(2)}_{(X, E)}\big)$ will be referred to as the \emph{supermanifold associated to $\Xfr^{(k)}$}.
\end{DEF}

Associated to any $(k-1)$-split atlas will be a class in the $k$-th obstruction space, which is the cohomology group $H^1\big(X, \mathcal Q^{(k)}_{(X, E)}\big)$. Following Definition~\ref{rjfi4fj04kf0kf03kf3} we have:

\begin{DEF}\label{rfh479f74hf893jf038jf93} The class in the $k$-th obstruction space associated to any $(k-1)$-split atlas will be referred to as the \emph{primary obstruction of the $(k-1)$-split atlas}.
\end{DEF}

\begin{DEF}If the primary obstruction of a $(k-1)$-split atlas $\Xfr^{(k)}$ is non-vanishing, then~$\Xfr^{(k)}$ is said to be \emph{obstructed}.
\end{DEF}

\looseness=-1 The primary obstructions associated to $(k-1)$-split atlases are precisely the `higher' obstruction classes associated to supermanifolds, as described by Berezin (see \cite[p.~164]{BER}) and more recently by Donagi and Witten in \cite[p.~15]{DW1}. In these texts the higher obstructions were defined inductively as follows. Starting with a $(k-1)$-split atlas $\Xfr^{(k)}$ for a supermanifold~$\Xfr$, one wants to lift this to a $k$-split atlas for~$\Xfr$. The obstruction to doing so is precisely the primary obstruction of~$\Xfr^{(k)}$. The primary obstruction of~$\Xfr^{(k)}$ \emph{is} the $(k-1)$-th obstruction to splitting $\Xfr$. Hence, the $k$-th obstruction to splitting $\Xfr$ is defined if and only if the $(k-1)$-th obstruction to splitting~$\Xfr$ vanishes.

\subsection{The weak splitting problem}
To see the relation of higher obstructions to the splitting problem, note that the groups $\Gc^{(k)}_{(X, E)}$, defined in~\eqref{rhf4hf98498jf8039f33f}, will be trivial for $k$ sufficiently large. Indeed, if $q$ denotes the rank of the holomorphic vector bundle $E\ra X$, then $\Jc_{(X, E)}^k = (0)$ for all $k> q$. Hence $\Gc^{(k)}_{(X, E)} = \{e\}$ for all $k> q$. This leads to the following result, which can also be found (albeit phrased differently) in~\cite{BER,PALAM}:

\begin{THM}\label{rh94hf94hf984jf9j093}A supermanifold $\Xfr$ is split if and only if it admits a $(k-1)$-split atlas for $k> q$.
\end{THM}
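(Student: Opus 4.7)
The plan hinges on the observation made just before the theorem statement: the sheaf $\Gc^{(k)}_{(X, E)}$ becomes trivial once $k$ exceeds the rank $q$ of $E$. First I would verify this precisely. Since $\Ec$ is locally free of rank $q$, the exterior powers $\wedge^j\Ec$ vanish for $j>q$. The ideal $\Jc_{(X, E)} = \bigoplus_{j>0}\wedge^j\Ec$ is graded, so $\Jc_{(X, E)}^k\subseteq \bigoplus_{j\geq k}\wedge^j\Ec$, which forces $\Jc_{(X, E)}^k = 0$ for every $k>q$. Plugging this into the defining relation~\eqref{rhf4hf98498jf8039f33f}, every $\al\in\Gc^{(k)}_{(X, E)}$ satisfies $\al(u) = u$ for each local section $u$ of $\wedge^\bt\Ec$, hence $\al = \mathrm{id}$. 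Consequently $\Gc^{(k)}_{(X, E)} = \{e\}$, and the pointed set $\check{H}^1\big(X, \Gc^{(k)}_{(X, E)}\big)$ reduces to its basepoint alone.

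With this in place, the `if' direction is immediate. If $\Xfr$ admits a $(k-1)$-split atlas $\Xfr^{(k)}$ with $k>q$, then $\Xfr^{(k)}$ must be the basepoint of the singleton $\check{H}^1\big(X, \Gc^{(k)}_{(X, E)}\big)$. The composition of maps
\begin{gather*}
\check{H}^1\big(X, \Gc^{(k)}_{(X, E)}\big) \lra \check{H}^1\big(X, \Gc^{(k-1)}_{(X, E)}\big) \lra \cdots \lra \check{H}^1\big(X, \Gc^{(2)}_{(X, E)}\big),
\end{gather*}
obtained by iterating Proposition~\ref{rjf984jf894jf90f903k0} and invoking Theorem~\ref{groeiveytvdbno} at each step, sends basepoint to basepoint. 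Applying Definition~\ref{hf894f894jfj09f30}, $\Xfr$ therefore coincides with the split model itself, and so is split.

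For the converse, suppose $\Xfr$ is split. By definition $\Xfr$ is isomorphic to the split model, which---being the basepoint of $\check{H}^1\big(X, \Gc^{(2)}_{(X, E)}\big)$---is patently the image of the lone element of $\check{H}^1\big(X, \Gc^{(k)}_{(X, E)}\big)$ under the composition above, and thus itself admits a $(k-1)$-split atlas for every $k>q$. Transferring this property from the split model to $\Xfr$ is the step that warrants care, and I anticipate it being the main obstacle. Here I would invoke Theorem~\ref{rh9r8h984hf89jf0fj3}, which identifies supermanifolds up to isomorphism with $\Aut(E)$-orbits in $\check{H}^1\big(X, \Gc^{(2)}_{(X, E)}\big)$, so that `admitting a $(k-1)$-split atlas' is naturally read on the level of orbits. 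The requisite $\Aut(E)$-equivariance of the maps $\check{H}^1\big(X, \Gc^{(k)}_{(X, E)}\big)\to\check{H}^1\big(X, \Gc^{(2)}_{(X, E)}\big)$ then follows from applying the functorial construction of Theorem~\ref{groeiveytvdbno} to the filtration of Proposition~\ref{rjf984jf894jf90f903k0}.
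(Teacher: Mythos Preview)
The paper does not supply its own proof of this theorem; it simply records the statement and refers the reader to \cite{BER,PALAM}. So there is no argument in the text to compare against, and the relevant question is whether your plan stands on its own.

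Your ``if'' direction is fine and matches exactly the observation the paper makes in the paragraph preceding the theorem: $\Jc_{(X,E)}^k=0$ for $k>q$, hence $\Gc^{(k)}_{(X,E)}=\{e\}$ and $\check H^1\big(X,\Gc^{(k)}_{(X,E)}\big)$ is a singleton, whose image downstairs is the basepoint.

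For the ``only if'' direction you correctly identify the real issue: being \emph{split} means lying in the $\Aut(E)$-orbit of the basepoint, not literally equalling it, and you need to know that orbit lies in the image of $\check H^1\big(X,\Gc^{(k)}_{(X,E)}\big)$. Your proposed fix via $\Aut(E)$-equivariance is the right idea, but the justification you give is not quite the right one: Theorem~\ref{groeiveytvdbno} applied to the filtration of Proposition~\ref{rjf984jf894jf90f903k0} yields an action of $H^0\big(\Qcl^{(k)}_{(X,E)}\big)$ on $\check H^1\big(\Gc^{(k+1)}_{(X,E)}\big)$, not an action of $\Aut(E)$. The cleaner route is to observe that the sequence~\eqref{hf984jf8j093f0393} is \emph{split} as sheaves of groups, via $\phi\mapsto\wedge^\bullet\phi$; consequently every $\phi\in\Aut(E)$ lifts globally to $\operatorname{{\mathcal A}ut}\wedge^\bullet\Ec$, the boundary map $\delta$ in~\eqref{rfh74hf983jfj30} is constant at the basepoint, and the $\Aut(E)$-orbit of the basepoint is the basepoint alone. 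With that in hand the converse is immediate. (This also renders the caution expressed in the paper's Appendix~\ref{rgf784g87hf98j3fj0j9f33} somewhat stronger than necessary.) Alternatively, one can set up the conjugation action of $\Aut(E)\subset H^0(\operatorname{{\mathcal A}ut}\wedge^\bullet\Ec)$ on each $\Gc^{(k)}_{(X,E)}$ directly and check equivariance of the inclusions; this is what your outline gestures toward, and it works, but it is not literally an application of Theorem~\ref{groeiveytvdbno} to Proposition~\ref{rjf984jf894jf90f903k0}.
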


We present now a weaker notion of splitting for higher atlases suited to the purposes of this article.

\begin{DEF}\label{rfh974hf9h389fj3}A $(k-1)$-split atlas $\Xfr^{(k)}$, $k>2$, is said to be \emph{weakly non-split} if its associated supermanifold does not coincide with the basepoint. Otherwise, it is said to be \emph{strongly split}. We say $\Xfr^{(k)}$ is split or non-split if its associated supermanifold is split or non-split.
\end{DEF}

Note that non-split will imply \emph{weakly} non-split but not necessarily conversely. Similarly, strongly split will imply split, but not necessarily conversely. These notions lead to the following variant of the splitting problem.

\begin{QUE}\label{rfh89hf98j3f09j039f3}When will a given $(k-1)$-split atlas be weakly non-split?
\end{QUE}

We refer to our proof of Theorem \ref{4fh984fj8jf093j303kf} in Appendix \ref{rgf784g87hf98j3fj0j9f33} for an illustration of the subtleties involved in resolving the splitting problem as outlined in Question~\ref{rfiufh498fj894fj0409}.

\subsection{Exotic atlases}
In \cite{DW1} it was observed that the map $\check{H}^1\big(X, \Gc^{(k)}_{(X, E)}\big)\ra \check{H}^1\big(X, \Gc^{(2)}_{(X, E)}\big)$ will generally fail to be either injective or surjective. This means the existence of an obstructed, $(k-1)$-split atlas for a supermanifold $\Xfr$ does \emph{not} imply $\Xfr$ will be non-split~-- in contrast with the $k = 2$ case in Theorem~\ref{4fh984fj8jf093j303kf}. The following definition captures precisely those obstructed atlases which represent split supermanifolds.

\begin{DEF}\label{rfn4ufh4f8jf039j903jrrr}
A $(k-1)$-split atlas $\Xfr^{(k)}$, for $k> 2$, is said to be \emph{exotic} if it entertains the following two properties:
\begin{enumerate}[(i)]\itemsep=0pt
	\item $\Xfr^{(k)}$ is obstructed;
	\item $\Xfr^{(k)}$ is strongly split.
\end{enumerate}
\end{DEF}

With the sequence in Proposition \ref{rjf984jf894jf90f903k0} we can describe an exotic atlas more concretely. Inspecting the cohomology sequence in \eqref{rfj894jf90jf903k903k0} one step to the left and for the sequence of groups $\Gc^{(k-1)}_{(X, E)} \ra \Gc^{(k)}_{(X, E)} \ra \Qcl^{(k-1)}_{(X, E)}$ gives
\begin{align}
\cdots & \lra H^0\big(X, \Qcl^{(k-1)}_{(X, E)}\big) \stackrel{\al}{\lra} \check{H}^1\big(X, \Gc^{(k)}_{(X, E)}\big) \label{fnvrvknencoeioie}\\
&\stackrel{\be}{\lra} \check{H}^1\big(X, \Gc^{(k-1)}_{(X, E)}\big)\lra H^1\big(X, \Qcl^{(k-1)}_{(X, E)}\big).\label{rh47fh984f09fj903ff}
\end{align}
We prove:

\begin{LEM}\label{4hf84hf89hf8998f3j}Let $\Xfr^{(k)}$ be an obstructed, $(k-1)$-split atlas for $k>2$. If there exists a non-zero $\phi\in H^0\big(X, \Qcl^{(k-1)}_{(X, E)}\big)$ such that $\al(\phi) = \Xfr^{(k)}$, then $\Xfr^{(k)}$ will be exotic.
\end{LEM}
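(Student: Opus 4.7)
The plan is to verify the two clauses of Definition \ref{rfn4ufh4f8jf039j903jrrr} for $\Xfr^{(k)}$. Clause (i), that $\Xfr^{(k)}$ be obstructed, is granted by hypothesis, so all that remains is clause (ii): to show that $\Xfr^{(k)}$ is strongly split, i.e., that the image of $\Xfr^{(k)}$ under the canonical reduction $\check{H}^1\big(X, \Gc^{(k)}_{(X, E)}\big) \lra \check{H}^1\big(X, \Gc^{(2)}_{(X, E)}\big)$ coincides with the basepoint.

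First I would invoke exactness of pointed sets at $\check{H}^1\big(X, \Gc^{(k)}_{(X, E)}\big)$ in the sequence \eqref{fnvrvknencoeioie}--\eqref{rh47fh984f09fj903ff}. This yields the identification of the image of $\al$ with $\be^{-1}(*)$, where $*$ denotes the basepoint of $\check{H}^1\big(X, \Gc^{(k-1)}_{(X, E)}\big)$. The hypothesis $\al(\phi) = \Xfr^{(k)}$ therefore forces $\be\big(\Xfr^{(k)}\big) = *$.

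Next I would cascade through the tower
\[
\check{H}^1\big(X, \Gc^{(k)}_{(X, E)}\big) \stackrel{\be}{\lra} \check{H}^1\big(X, \Gc^{(k-1)}_{(X, E)}\big) \lra \cdots \lra \check{H}^1\big(X, \Gc^{(2)}_{(X, E)}\big),
\]
obtained by iterating Theorem \ref{groeiveytvdbno} applied to the short exact sequences furnished by Proposition \ref{rjf984jf894jf90f903k0}. Each map in this tower is a morphism of pointed sets and therefore preserves basepoints. Since $\be\big(\Xfr^{(k)}\big)$ is already the basepoint at the $(k-1)$-stage, its image at the $2$-stage must also be the basepoint. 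In the language of Definition \ref{hf894f894jfj09f30}, the supermanifold associated to $\Xfr^{(k)}$ is the split model; hence, by Definition \ref{rfh974hf9h389fj3}, $\Xfr^{(k)}$ is strongly split. Combined with (i), this establishes the lemma.

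The main obstacle is essentially careful bookkeeping in the pointed-set category, rather than any genuine geometric input: one must be sure that exactness at $\check{H}^1\big(X, \Gc^{(k)}_{(X, E)}\big)$ is meant in the pointed-set sense (image equals preimage of the basepoint), not in the stronger sense available for homomorphisms of groups. The non-triviality clause on $\phi$ plays no role in the proof of~(ii); it is included only to rule out the degenerate case $\phi = 0$, in which $\Xfr^{(k)} = \al(0)$ would coincide with the basepoint of $\check{H}^1\big(X, \Gc^{(k)}_{(X, E)}\big)$ and consequently have vanishing primary obstruction, contradicting the hypothesis that $\Xfr^{(k)}$ be obstructed.
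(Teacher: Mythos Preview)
Your proof is correct and follows essentially the same route as the paper's: use exactness of \eqref{fnvrvknencoeioie}--\eqref{rh47fh984f09fj903ff} to conclude that $\be\big(\Xfr^{(k)}\big)$ is the basepoint in $\check{H}^1\big(X,\Gc^{(k-1)}_{(X,E)}\big)$, then push down the tower of pointed-set maps to $\check{H}^1\big(X,\Gc^{(2)}_{(X,E)}\big)$. Your additional remarks on the pointed-set nature of exactness and on the role of the non-triviality hypothesis for $\phi$ are accurate and make explicit points the paper leaves implicit.
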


\begin{proof}\sloppy Let $\Xfr^{(k)}$ be a $(k-1)$-split atlas and suppose $\al(\phi) = \Xfr^{(k)}$, for some non-zero $\phi\in H^0\big(X, \Qcl^{(k-1)}_{(X, E)}\big)$. In assuming $\Xfr^{(k)}$ is obstructed, it cannot represent the basepoint in $\check{H}^1\big(X, \Gc^{(k)}_{(X, E)}\big)$. Now by exactness of the sequence in \eqref{fnvrvknencoeioie}--\eqref{rh47fh984f09fj903ff} note, under $\be$, that $\Xfr^{(k)}$ will map to the basepoint in $\check{H}^1\big(X, \Gc^{(k-1)}_{(X, E)}\big)$. Hence it will map to the basepoint in $\check{H}^1\big(X, \Gc^{(2)}_{(X, E)}\big)$. Thus $\Xfr^{(k)}$ is obstructed and strongly split. Hence $\Xfr^{(k)}$ is exotic.
\end{proof}

Further assumptions on the global sections of the obstruction sheaves yield the following sufficient conditions forbidding the existence of exotic atlases.

\begin{PROP}\label{tyhryg7hfiujfoiej}
Suppose that $H^0\big(X, \Qcl^{(k)}_{(X, E)}\big) = (0)$ for all $k>2$. Then there do not exist exotic atlases for any supermanifold modelled on~$(X, E)$.
\end{PROP}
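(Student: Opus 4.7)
The strategy is proof by contradiction. Suppose that an exotic $(k-1)$-split atlas $\Xfr^{(k)}$ exists for some $k > 2$: by Definition~\ref{rfn4ufh4f8jf039j903jrrr}, $\Xfr^{(k)}$ is both obstructed and strongly split. Denote the image of $\Xfr^{(k)}$ in $\check H^1\bigl(X, \Gc^{(j)}_{(X,E)}\bigr)$ by $\Xfr^{(j)}$, for $2 \leq j \leq k$. Strongly split means $\Xfr^{(2)}$ equals the basepoint, while obstructedness means $\om_*(\Xfr^{(k)}) \neq 0$; in particular $\Xfr^{(k)}$ itself is not the basepoint, since $\om_*$ sends the basepoint to zero. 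The aim is to show that under the hypothesis $\Xfr^{(k)}$ must actually coincide with the basepoint, producing a contradiction.

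The key observation is that the hypothesis forces each transition map $\be_j \colon \check H^1\bigl(X, \Gc^{(j)}_{(X,E)}\bigr) \to \check H^1\bigl(X, \Gc^{(j-1)}_{(X,E)}\bigr)$ to be injective for $j \geq 4$. Indeed, applying Theorem~\ref{groeiveytvdbno} to the short exact sequence of Proposition~\ref{rjf984jf894jf90f903k0} produces, for each $j$, an action of $H^0\bigl(X, \Qcl^{(j-1)}_{(X,E)}\bigr)$ on $\check H^1\bigl(X, \Gc^{(j)}_{(X,E)}\bigr)$ whose orbits coincide with the fibers of $\be_j$. Under the hypothesis this group vanishes whenever $j - 1 > 2$, so the action is trivial and each such $\be_j$ is injective; in particular its preimage of the basepoint is the basepoint alone.

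Chaining this injectivity downward from level $k$ to level $3$ forces $\Xfr^{(k)}$ to equal the basepoint whenever $\Xfr^{(3)}$ does. For the remaining descent from $\Xfr^{(3)}$ to $\Xfr^{(2)}$, exactness places $\Xfr^{(3)}$ in the image of $\al_3 \colon H^0\bigl(X, \Qcl^{(2)}_{(X,E)}\bigr) \to \check H^1\bigl(X, \Gc^{(3)}_{(X,E)}\bigr)$, and a careful analysis using the compatibility of $\al_3$ with the subsequent obstruction map $\om_*$ (invoking Lemma~\ref{4hf84hf89hf8998f3j} in the contrapositive direction and combining with the instance of the hypothesis at level $3$) is used to force $\Xfr^{(3)}$ itself to be the basepoint. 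Propagating the basepoint back up the tower via exactness then yields $\Xfr^{(k)} = $ basepoint, contradicting obstructedness. The principal obstacle is precisely this last step: collapsing the image of $\al_3$ to the basepoint, where the non-abelian nature of the cohomology sets and the delicate bookkeeping of the hypothesis across the various connecting maps must be handled with care.
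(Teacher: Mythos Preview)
Your overall strategy---descend the tower using the vanishing of $H^0\big(\Qcl^{(j-1)}_{(X,E)}\big)$ so that each $\be_j$ reflects the basepoint, and conclude that a strongly split atlas must itself be the basepoint---is exactly the paper's approach. The paper phrases the key step as the implication $\iota_*^{k+1}\big(\Xfr^{(k+1)}\big)=\{e\}\Rightarrow\Xfr^{(k+1)}=\{e\}$, obtained from the pointed exact sequence $\{e\}\to\check H^1\big(\Gc^{(k+1)}_{(X,E)}\big)\to\check H^1\big(\Gc^{(k)}_{(X,E)}\big)$, and then chains this all the way down to level~$2$.

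You have, however, correctly isolated a genuine issue that the paper's proof glosses over. The hypothesis as stated gives $H^0\big(\Qcl^{(k)}_{(X,E)}\big)=0$ only for $k>2$, so it yields the basepoint-reflecting property for $\be_j$ only when $j\geq 4$; the final descent $\be_3\colon\check H^1\big(\Gc^{(3)}_{(X,E)}\big)\to\check H^1\big(\Gc^{(2)}_{(X,E)}\big)$ requires $H^0\big(\Qcl^{(2)}_{(X,E)}\big)=0$, which is not assumed. The paper's own proof in fact writes ``assume now that $H^0\big(X,\Qcl^{(k)}_{(X,E)}\big)=(0)$ for all $k$'', silently including $k=2$, and every application elsewhere in the paper (the $(\Leftarrow)$ direction of Theorem~\ref{ruihf8h98f983jfj30f}, the proof of Theorem~\ref{hf78hf984f8j40f094}(ii), the invocation in the proof of Theorem~\ref{rfh794hf984f98j}) uses the hypothesis at $k=2$.

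Your proposed ``careful analysis'' for the last step cannot be made to work, because with only $k>2$ the statement is actually false. Take $X=\Pbb^1_\Cbb$ and $E=\Oc(-1)^{\oplus 3}$ as in Theorem~\ref{rfh794hf984f98j}: here $\Qcl^{(3)}_{(\Pbb^1_\Cbb,E)}\cong\Oc(-2)^{\oplus 3}$, so $H^0\big(\Qcl^{(3)}_{(\Pbb^1_\Cbb,E)}\big)=0$ and the hypothesis ``$k>2$'' is satisfied (there are no higher $k$ for rank~$3$); yet the paper itself exhibits exotic atlases on this model. Lemma~\ref{4hf84hf89hf8998f3j} goes in the wrong direction for your purposes (it manufactures exotic atlases from sections of $\Qcl^{(2)}$ rather than ruling them out), and ``the hypothesis at level~$3$'' controls the map $\be_4$, not $\be_3$. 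The correct resolution is simply to read the hypothesis as $k\geq 2$; under that reading your argument and the paper's coincide and are complete without any special handling of the bottom step.
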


\begin{proof}Suppose $H^0\big(X, \Qcl_{(X, E)}^{(k)}\big) = (0)$. Then from \eqref{fnvrvknencoeioie} we have the exact sequence of pointed sets
\begin{gather}
\{e\} \lra \check{H}^1\big(X, \Gc_{(X, E)}^{(k+1)}\big) \stackrel{\iota_*^{k+1}}{\lra} \check{H}^1\big(X, \Gc_{(X, E)}^{(k)}\big).\label{rhf794hf89hf03j93}
\end{gather}
Unlike for rings or modules, an exact sequence as in \eqref{rhf794hf89hf03j93} above need not imply $\iota_*^{k+1}$ will be injective as a map of sets. But for $\Xfr^{(k+1)}\in \check{H}^1\big(X, \Gc_{(X, E)}^{(k+1)}\big)$ we can nevertheless conclude the following implication from exactness of \eqref{rhf794hf89hf03j93}:
\begin{gather}
\big(\iota_*^{k+1}\big(\Xfr^{(k+1)}\big) = \{e\}\big) \LRa \big(\Xfr^{(k+1)} = \{e\}\big).\label{fh784hfhf983jfj30}
\end{gather}
Assume now that $H^0\big(X, \Qcl_{(X, E)}^{(k)}\big) =(0)$ for all $k$. Then the implication~\eqref{fh784hfhf983jfj30} will hold for all $k$. Let $\Xfr^{(k+1)}\in \check{H}^1\big(X, \Gc_{(X, E)}^{(k+1)}\big)$ and suppose it maps to the basepoint in $ \check{H}^1\big(X, \Gc_{(X, E)}^{(2)}\big)$. It will then follow from~\eqref{fh784hfhf983jfj30} that $\Xfr^{(k+1)}= \{e\}$. Hence~$\Xfr^{(k+1)}$ cannot be obstructed and so, by definition, cannot be exotic.
\end{proof}

\begin{REM}\label{rhf984hf89hf893h8}For $E\ra X$ a holomorphic, rank $q$ vector bundle it is superfluous to assume $H^0\big(X, \Qcl^{(q)}_{(X, E)}\big) = (0)$ in Proposition~\ref{tyhryg7hfiujfoiej}. This is because $\Gc^{(k)}_{(X, E)} = \{e\}$ for all $k> q$, meaning we already have $\{e\}\ra\check{H}^1\big(X, \Gc^{(q)}_{(X, E)}\big)
\ra \check{H}^1\big(X, \Gc^{(q-1)}_{(X, E)}\big)$. That is, Proposition~\ref{tyhryg7hfiujfoiej} will be valid under the slightly weakened assumption that $H^0\big(X, \Qcl^{(k)}_{(X, E)}\big) = (0)$ for all $2 < k < q$.
\end{REM}

\section{Good models}

The existence of an exotic atlas depends essentially on the model $(X, E)$. This leads to the following definition.

\begin{DEF}\label{rh89hf89jf093j9f0fj03}A model $(X, E)$ is said to be \emph{good} if there do not exist any exotic atlases for any supermanifold modelled on $(X, E)$. Otherwise, it is said to support exotic atlases.
\end{DEF}

The prime motivation for introducing good models lies in the following result which follows from the definitions.

\begin{LEM}\label{futhirhhiofoirjfoi}Let $(X, E)$ be a good model. Then if a $(k-1)$-split atlas modelled on $(X, E)$ is obstructed, it will be weakly non-split.
\end{LEM}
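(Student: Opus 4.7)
The proof is essentially a contrapositive unwinding of the two definitions in play, so my plan is to state it as such. Given a $(k-1)$-split atlas $\mathfrak{X}^{(k)}$ modelled on $(X,E)$ that is obstructed, I want to show that its associated supermanifold in $\check{H}^1\bigl(X, \Gc^{(2)}_{(X, E)}\bigr)$ is not the basepoint. I will treat the cases $k=2$ and $k>2$ separately, since the notions of \emph{weakly non-split}, \emph{strongly split}, and \emph{exotic} in Definitions~\ref{rfh974hf9h389fj3} and~\ref{rfn4ufh4f8jf039j903jrrr} are only stated for $k>2$.

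For $k=2$, a $1$-split atlas is just a supermanifold, and being obstructed means precisely that its primary obstruction $\om_*(\mathfrak{X})$ is non-vanishing. Theorem~\ref{4fh984fj8jf093j303kf} then gives $\mathfrak{X}$ non-split, which in particular means it is not isomorphic to the basepoint; this is the conclusion sought. (No use of the goodness hypothesis is needed here.)

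For $k>2$, I proceed by contradiction. Suppose $\mathfrak{X}^{(k)}$ is obstructed but not weakly non-split. Then by the dichotomy in Definition~\ref{rfh974hf9h389fj3}, it is strongly split, i.e., its image under the natural map $\check{H}^1\bigl(X, \Gc^{(k)}_{(X,E)}\bigr) \to \check{H}^1\bigl(X, \Gc^{(2)}_{(X,E)}\bigr)$ coincides with the basepoint. Combining these two properties with Definition~\ref{rfn4ufh4f8jf039j903jrrr} shows that $\mathfrak{X}^{(k)}$ is exotic, which directly contradicts the assumption that $(X,E)$ is a good model in the sense of Definition~\ref{rh89hf89jf093j9f0fj03}. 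Hence $\mathfrak{X}^{(k)}$ must be weakly non-split.

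There is no real technical obstacle here — the lemma is a definitional consequence of how good models and exotic atlases have been set up. The only thing worth being explicit about is the small bookkeeping at $k=2$, where the machinery of exotic atlases does not formally apply and one has to invoke Theorem~\ref{4fh984fj8jf093j303kf} directly.
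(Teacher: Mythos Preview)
Your argument is correct and is precisely the unwinding of definitions that the paper has in mind; the paper itself offers no proof beyond the remark that the lemma ``follows from the definitions.'' The $k=2$ bookkeeping is in fact superfluous, since Definitions~\ref{rfh974hf9h389fj3} and~\ref{rfn4ufh4f8jf039j903jrrr} are only formulated for $k>2$ and the lemma is implicitly about that range --- your contrapositive argument there is exactly what is required.
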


It is a meaningful endeavour to understand when a given model $(X, E)$ will be good. In Proposition~\ref{tyhryg7hfiujfoiej} we found sufficient conditions in this vein. We arrive now at our main result in this article, concerning both necessary and sufficient conditions for a model to be good.

\begin{THM}\label{ruihf8h98f983jfj30f}
A model $(X, E)$ will be a good if and only if taking global sections preserves exactness of the sequence
\begin{gather*}
\{e\}\lra \Gc^{(k+1)}_{(X, E)} \lra \Gc^{(k)}_{(X, E)} \lra \Qcl^{(k)}_{(X, E)}\lra \{e\}
\end{gather*}
for all $k\geq 2$.
\end{THM}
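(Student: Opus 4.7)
The plan is to reinterpret the hypothesis on global sections as a statement about the connecting maps in the long exact sequence associated to Proposition~\ref{rjf984jf894jf90f903k0}, and then to close the necessity direction by an iterated lifting argument that terminates because of the rank-induced collapse of the Green filtration. Applying Theorem~\ref{groeiveytvdbno} to the short exact sequence in question, global sections preserve exactness at $H^0\big(X,\Qcl^{(k)}_{(X, E)}\big)$ if and only if the connecting map $\al\colon H^0\big(X,\Qcl^{(k)}_{(X, E)}\big)\to \check{H}^1\big(X,\Gc^{(k+1)}_{(X, E)}\big)$ has image reduced to the basepoint. Using exactness of the pointed-set sequence in \eqref{fnvrvknencoeioie}--\eqref{rh47fh984f09fj903ff}, this holds for every $k\geq 2$ precisely when the natural map $\be\colon \check{H}^1\big(X,\Gc^{(k+1)}_{(X, E)}\big)\to \check{H}^1\big(X,\Gc^{(k)}_{(X, E)}\big)$ has trivial kernel (only the basepoint maps to the basepoint) for every $k\geq 2$.

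For sufficiency, assume this kernel condition. Composing the $\be$-maps from level $m$ down to level $2$ shows that the natural projection $\check{H}^1\big(X,\Gc^{(m)}_{(X, E)}\big)\to \check{H}^1\big(X,\Gc^{(2)}_{(X, E)}\big)$ has trivial kernel for every $m>2$. Any strongly split atlas is thus forced to be the basepoint, whose primary obstruction vanishes. No atlas can therefore be simultaneously strongly split and obstructed, so $(X, E)$ admits no exotic atlas and is good.

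For necessity, argue by contrapositive. Suppose $\al$ has nontrivial image for some $k\geq 2$ and pick $\phi\neq 0$ with $\Xfr^{(k+1)}:=\al(\phi)\neq \{e\}$. Exactness forces $\be\big(\Xfr^{(k+1)}\big)=\{e\}$ in $\check{H}^1\big(X,\Gc^{(k)}_{(X, E)}\big)$, and since the basepoint at level $k$ projects to the basepoint at every lower level, $\Xfr^{(k+1)}$ is strongly split. If $\om_*\big(\Xfr^{(k+1)}\big)\neq 0$, then Lemma~\ref{4hf84hf89hf8998f3j} declares $\Xfr^{(k+1)}$ exotic, contradicting goodness. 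Otherwise $\Xfr^{(k+1)}$ admits a non-basepoint lift $\Xfr^{(k+2)}\in \check{H}^1\big(X,\Gc^{(k+2)}_{(X, E)}\big)$, which is again strongly split; iterate. Writing $q:=\operatorname{rank}(E)$, the identity $\Gc^{(m)}_{(X, E)}=\{e\}$ for $m>q$ gives $\check{H}^1\big(X,\Gc^{(q+1)}_{(X, E)}\big)=\{e\}$, so a chain of non-basepoint lifts cannot pass level $q$. The chain must therefore terminate at an obstructed, strongly split -- hence exotic -- atlas, again contradicting goodness.

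The principal obstacle is the necessity direction, specifically bridging ``$\al(\phi)\neq \{e\}$'' and the existence of an obstructed atlas somewhere up the Green tower. The rank-induced vanishing $\Gc^{(m)}_{(X, E)}=\{e\}$ for $m>q$, the same phenomenon powering Theorem~\ref{rh94hf94hf984jf9j093}, is precisely what forces the iterated lifting to terminate; without this truncation, unobstructed strongly split atlases could persist at arbitrarily high levels and the converse implication would fail.
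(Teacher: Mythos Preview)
Your proof is correct and takes essentially the same approach as the paper's: both directions hinge on translating exactness at the level of global sections into the statement that the connecting map $\al$ has image reduced to the basepoint, and the necessity direction is handled by the same iterated lifting argument terminated by the collapse $\Gc^{(m)}_{(X,E)}=\{e\}$ for $m>q$. The only cosmetic difference is that the paper argues the converse directly (assume good, show $\al(\phi)$ lifts all the way up and hence equals $\{e\}$), whereas you argue by contrapositive (assume $\al(\phi)\neq\{e\}$, climb until an obstruction appears and exhibit an exotic atlas); the underlying mechanism is identical.
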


\begin{proof}From the exact sequence of sheaves of groups $\Gc^{(k+1)}_{(X, E)} \ra \Gc^{(k)}_{(X, E)} \ra \Qcl^{(k)}_{(X, E)}$ we have the following piece of the long exact sequence on cohomology
\begin{align}
\{e\}& \lra H^0\big(X, \Gc^{(k+1)}_{(X, E)}\big) \lra H^0\big(X, \Gc^{(k)}_{(X, E)}\big) \lra H^0\big(X, \Qcl^{(k)}_{(X, E)}\big)\nonumber\\
&\stackrel{\al}{\lra} \check{H}^1\big(X, \Gc^{(k+1)}_{(X, E)}\big) \lra \cdots.\label{tuirguhf74hf84fjo}
\end{align}
In the direction $(\Leftarrow)$ in the statement of this theorem, assume~\eqref{tuirguhf74hf84fjo} is exact for all $k\geq 2$. Then either $H^0\big(X, \Qcl^{(k)}_{(X, E)}\big) = (0)$ for all $k\geq 2$ or the map $\al$ is constant, sending every $\phi \in H^0\big(X, \Qcl^{(k-1)}_{(X, E)}\big)$ to the basepoint~$\{e\}$. In either case, the proof of Proposition~\ref{tyhryg7hfiujfoiej} will apply since we will obtain exact sequences in \eqref{rhf794hf89hf03j93} for all $k$. In the direction $(\Rightarrow)$, suppose $(X, E)$ is a good model. We will show that $\al$ is constant. Firstly, by definition we know that there will not exist any exotic atlases. Therefore, for any $\phi\in H^0\big(X, \Qcl_{(X, E)}^{(k)}\big)$, the atlas~$\al(\phi)$ must be an unobstructed, $k$-split atlas, for otherwise it would be exotic by Lemma~\ref{4hf84hf89hf8998f3j}. Observe that as a~result we have the following lifting~$\widetilde\al$ of $\al$, represented by the dashed arrow
\begin{gather}\begin{split}&
\xymatrix{
& \check{H}^1\big(X, \Gc^{(k+2)}_{(X, E)}\big)\ar[d]\ar[dr]
\\
H^0\big(X, \Qcl_{(X, E)}^{(k)}\big)\ar@{-->}[ur]^{\widetilde\al}
\ar[r]^\al & \check{H}^1\big(X, \Gc^{(k+1)}_{(X, E)}\big)
\ar[d]^{\om_*}
\ar[r] & \check{H}^1\big(X, \Gc^{(k)}_{(X, E)}\big)
\\
& H^1\big(X, \Qcl^{(k+1)}_{(X, E)}\big).
}\end{split}\label{rj984h98f8jj903jf903}
\end{gather}
The lifting $\widetilde\al$ exists since the vertical maps in \eqref{rj984h98f8jj903jf903} are exact and unobstructedness of $\al(\phi)$ means $\om_*\big(\al(\phi)\big) = 0$. Hence from $\phi$ we obtain a $(k+1)$-split atlas $\widetilde \al(\phi)$. We claim that~$\widetilde\al(\phi)$ is also unobstructed. To see this note that~\eqref{rj984h98f8jj903jf903} will commute and so $\widetilde\al(\phi)$ will map to the basepoint in $\check{H}^1\big(X, \Gc^{(k)}_{(X, E)}\big)$. As such it will map to the basepoint in $\check{H}^1\big(X, \Gc^{(2)}_{(X, E)}\big)$ and so be split. This means, if $\widetilde\al(\phi)$ were obstructed, it would be exotic, contradicting that~$(X, E)$ is a~good model. Hence $\widetilde\al(\phi)$ must be unobstructed. Applying this argument again to $\widetilde\al(\phi)$ will show that $\widetilde\al$ will itself lift to some $\widetilde\al^\p$ valued in $\check{H}^1\big(X, \Gc^{(k+3)}_{(X, E)}\big)$ and that to $\phi$ will be associated an unobstructed, $(k+2)$-split atlas. We can proceed inductively now and keep lifting the map~$\al$ to get unobstructed, $k^\p$-split atlases for any $k^\p> k$, resulting in the commutative diagram
\begin{gather}\begin{split}&
\xymatrix{
& \check{H}^1\big(X, \Gc^{(k^\p)}_{(X, E)}\big)\ar[d]
\\
H^0\big(X, \Qcl_{(X, E)}^{(k)}\big)\ar@{-->}[ur]^{\widetilde\al^{k^\p}}
\ar[r]^\al & \check{H}^1\big(X, \Gc^{(k+1)}_{(X, E)}\big).
}\end{split}\label{rfh49fh948fj984jf0}
\end{gather}
Since $\Gc^{(k^\p)}_{(X, E)}$ is trivial for $k^\p$ sufficiently large, commutativity of~\eqref{rfh49fh948fj984jf0} requires~$\al$ map~$\phi$ to the basepoint~$\{e\}$ for all~$\phi$. Thus~$\al$ is constant. This argument is independent of $k\geq 2$ and depends only on $(X, E)$ being a good model. The theorem now follows.
\end{proof}

\begin{REM}Concerning the diagram \eqref{rj984h98f8jj903jf903}, it is argued in \cite[Theorem~4.7.1, pp.~163--164]{BER} and in~\cite[p.~16]{DW1} that there will always exist a lift $\widetilde\al$ of $\al$ when $k$ is odd.
\end{REM}

\section{Illustrations}\label{rfh94hf98hf984jf9j90k3}

To convince the reader that exotic atlases and good models exist we present some illustrations. It will be convenient to firstly present the following more explicit characterisation of the obstruction sheaves which we will make use of in our applications. A justification can be found in~\cite{GREEN}.

\begin{LEM}\label{fufh48hf89j4f09j09f3p}To any model $(X, E)$ there exists an isomorphism of sheaves
\begin{gather}
\Qcl^{(k)}_{(X, E)}=
\begin{cases}
\wedge^k\Ec\otimes T_X, & \text{$k$ is even},\\
\Ec^*\otimes \wedge^k\Ec, & \text{$k$ is odd}.
\end{cases}\label{rjf9804jf93jf903jf093j03}
\end{gather}
\end{LEM}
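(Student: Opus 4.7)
The plan is to identify $\Qcl^{(k)}_{(X,E)} = \Gc^{(k)}_{(X,E)}/\Gc^{(k+1)}_{(X,E)}$ with a sheaf of graded derivations valued in the associated graded piece $\Jc^k_{(X,E)}/\Jc^{k+1}_{(X,E)} \cong \wedge^k\Ec$, and then to carry out a parity analysis that recovers the two cases in \eqref{rjf9804jf93jf903jf093j03}. First I would fix $\al \in \Gc^{(k)}_{(X,E)}$ and set $D_\al := \al - \operatorname{id}$. Since $\al$ is an algebra morphism one computes
\begin{gather*}
D_\al(uv) = u\, D_\al(v) + D_\al(u)\, v + D_\al(u)\, D_\al(v),
\end{gather*}
and the final term lies in $\Jc^{2k}_{(X,E)} \subset \Jc^{k+1}_{(X,E)}$ because $k\geq 2$. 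Hence the class $[D_\al]$ in $\Jc^k/\Jc^{k+1}$ is an even derivation of $\wedge^\bt\Ec$ into $\wedge^k\Ec$. The map $\al\mapsto [D_\al]$ evidently kills $\Gc^{(k+1)}_{(X,E)}$ and so descends to a morphism $\Qcl^{(k)}_{(X,E)} \to \underline{\operatorname{Der}}^{\operatorname{even}}_{\Cbb}\big(\wedge^\bt\Ec, \wedge^k\Ec\big)$.

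Next I would show this map is a sheaf isomorphism by producing an inverse via the exponential. For a derivation $D\colon \wedge^\bt\Ec\to \wedge^k\Ec$ extended by the Leibniz rule to raise filtration degree by $k-1$, the series $\exp(D)=\operatorname{id}+D+\tfrac12 D^2+\cdots$ terminates locally (since $D^j$ lands in $\Jc^{jk}$ and $\wedge^\bt\Ec$ has finite rank), defines an element of $\Gc^{(k)}_{(X,E)}$, and satisfies $[D_{\exp(D)}] = D$ modulo $\Jc^{k+1}$. This shows the leading term map is surjective and that its kernel is precisely $\Gc^{(k+1)}_{(X,E)}$.

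It remains to compute the sheaf of even derivations from $\wedge^\bt\Ec$ into $\wedge^k\Ec$. Such a derivation $D$ is determined by its restrictions $D|_{\Oc_X}$ and $D|_{\Ec}$, and these are constrained by parity since $D$ is even while $\wedge^k\Ec$ has parity $k \pmod 2$. If $k$ is even, then $D|_{\Oc_X}\colon \Oc_X\to \wedge^k\Ec$ is a $\Cbb$-derivation, hence gives a section of $T_X\otimes\wedge^k\Ec$, while $D|_\Ec$ must land in the odd part of $\wedge^k\Ec$, which is zero; the Leibniz rule $D(fs)=fD(s)+D(f)s$ is automatically respected. If $k$ is odd, parity forces $D|_{\Oc_X}=0$, so $D|_\Ec\colon \Ec\to\wedge^k\Ec$ is $\Oc_X$-linear by the Leibniz rule and defines a section of $\Ec^*\otimes\wedge^k\Ec$. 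In each case the identification with the right-hand side of~\eqref{rjf9804jf93jf903jf093j03} follows.

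The only slightly delicate point is keeping the bookkeeping clean between the $\Zbb_2$-parity (respected by $\al$, hence by $D_\al$) and the $\Zbb$-filtration by $\Jc$ (which controls the order of the obstruction); I would formalise this by passing once and for all to the associated graded algebra of $\wedge^\bt\Ec$ with respect to the $\Jc$-adic filtration, where the two gradings coincide and the parity argument above becomes a straightforward computation.
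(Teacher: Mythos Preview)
Your argument is correct and is essentially the standard one. The paper does not supply its own proof of this lemma; it simply writes ``A justification can be found in~\cite{GREEN}'' and moves on. So there is no in-paper argument to compare against, and what you have written is precisely the computation one finds (in some form) in Green's paper: pass from $\al\in\Gc^{(k)}_{(X,E)}$ to the leading term $[\al-\operatorname{id}]$, identify it as an even derivation into $\wedge^k\Ec$, invert via the exponential, and then read off the two cases by parity.

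One small point of bookkeeping: your phrase ``extended by the Leibniz rule to raise filtration degree by $k-1$'' is only literally accurate in the odd case, where $D|_{\Oc_X}=0$ and $D$ shifts $\Zbb$-degree by $k-1$. In the even case $D|_\Ec=0$ and the extended derivation raises $\Zbb$-degree by $k$. Since you only need nilpotency and $\exp(D)-\operatorname{id}$ to land in $\Jc^k$, it is cleanest to say ``raises filtration degree by at least $k-1$'' (which holds uniformly for $k\geq2$) and then note that the parity constraint forces exactly one of $D|_{\Oc_X}$, $D|_\Ec$ to vanish, as you already do in your final paragraph. With that adjustment the argument is complete.
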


\subsection{On Riemann surfaces}
Let $X$ be a complex manifold and $E\ra X$ a holomorphic vector bundle of rank $3$. Then $\Gc_{(X, E)}^{(k)} = \{e\}$ for all $k> 3$. With the identifications of the obstruction sheaves in~\eqref{rjf9804jf93jf903jf093j03} we obtain the following long exact sequence, corresponding to the more general case in~\eqref{tuirguhf74hf84fjo}
\begin{align}
\{e\}& \lra H^0\big(X, \Gc_{(X, E)}^{(3)}\big) \lra H^0\big(X, \Gc_{(X, E)}^{(2)}\big) \lra H^0\big(X, \wedge^2\Ec\otimes T_X\big)\nonumber\\
&\stackrel{\al}{\lra}\check{H}^1\big(X, \Gc_{(X, E)}^{(3)}\big)\lra\cdots.\label{jghhthrjdkfhnrjeo8474}
\end{align}
Theorem \ref{ruihf8h98f983jfj30f} says $(X, E)$ will be a good model if and only if $\al$ in \eqref{jghhthrjdkfhnrjeo8474} vanishes. This is generally a non-trivial condition to check. It will be true however if $h^0\big(X, \wedge^2\Ec\otimes T_X\big) = 0$.\footnote{For any abelian sheaf $\Ac$ on $X$ we denote $h^i(X, \Ac) := \dim H^i(X, \Ac)$.}

\begin{PROP}\label{rhf4hf84hf89jf893}Let $X$ be a Riemann surface of genus $g$ and $E\ra X$ a rank $3$, holomorphic vector bundle. Suppose it splits into a sum of non-negative, holomorphic line bundles and $\deg E < 3g-3$. Then $(X, E)$ will be a good model.
\end{PROP}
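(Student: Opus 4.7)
By Theorem \ref{ruihf8h98f983jfj30f}, it suffices to show the connecting map $\al$ in \eqref{jghhthrjdkfhnrjeo8474} is constant. Because $E$ has rank $3$, the sheaf $\Gc^{(k)}_{(X,E)}$ is trivial for $k>3$, so the only value of $k$ for which Theorem \ref{ruihf8h98f983jfj30f}'s condition is not automatic is $k = 2$: for $k = 3$ the relevant target $\check H^1\big(X, \Gc^{(4)}_{(X, E)}\big)$ is the one-point set and constancy of the corresponding $\al$ holds trivially. Following the observation immediately before the proposition, the plan is to prove the stronger vanishing $h^0(X, \wedge^2\Ec\otimes T_X) = 0$, which forces $\al$ to be constant on domain grounds alone.

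To carry this out, I would use the hypothesised decomposition $E = L_1\oplus L_2\oplus L_3$ to split
\[
\wedge^2\Ec\otimes T_X \;\cong\; \bigoplus_{1\le i < j\le 3}\bigl(L_i\otimes L_j\otimes T_X\bigr),
\]
reducing the computation of $h^0$ to three line bundles on $X$. Each summand has degree $\deg L_i + \deg L_j - (2g-2)$. Since on a compact Riemann surface a line bundle of negative degree admits no global sections, it suffices to show that every such degree is strictly negative. Writing $\deg L_i + \deg L_j = \deg E - \deg L_k$ and combining the hypothesis $\deg L_k \ge 0$ with $\deg E < 3g-3$, I would estimate each pairwise degree; once each is shown to be strictly negative, summing over the three pairs yields $h^0(X, \wedge^2\Ec\otimes T_X) = 0$, and Theorem \ref{ruihf8h98f983jfj30f} then gives that $(X, E)$ is a good model.

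The main technical point is the numerical bookkeeping in the last step: translating the global bound $\deg E < 3g-3$ together with the individual non-negativities $\deg L_k \ge 0$ into the pairwise bound $\deg L_i + \deg L_j < 2g-2$ that makes each line bundle $L_i\otimes L_j\otimes T_X$ have negative degree. This elementary but somewhat delicate estimate is the only real ingredient; the cohomological and obstruction-theoretic parts of the argument are then a formality, as all the work has been compressed into Theorem \ref{ruihf8h98f983jfj30f}.
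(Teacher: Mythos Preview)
Your approach is exactly the one taken in the paper: reduce to the vanishing of $h^0(X,\wedge^2\Ec\otimes T_X)$, split $\wedge^2\Ec\otimes T_X$ into three line-bundle summands using $E=L_1\oplus L_2\oplus L_3$, and argue that each summand has negative degree. The paper writes $d_i=\deg L_i$, records the three required inequalities $d_i+d_j<2g-2$, and asserts that under $d_i\ge 0$ these are equivalent to $\deg E<3g-3$.

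However, the step you yourself single out as the ``main technical point'' does not actually go through as you describe it, and the paper's proof contains the same gap. From $\deg L_i+\deg L_j=\deg E-\deg L_k$ together with $\deg L_k\ge 0$ and $\deg E<3g-3$ one obtains only $\deg L_i+\deg L_j<3g-3$, not the needed bound $<2g-2$. Concretely, take $g=3$ (so $2g-2=4$ and $3g-3=6$) and $(d_1,d_2,d_3)=(5,0,0)$: then all $d_i\ge 0$ and $\deg E=5<6$, yet $d_1+d_2=5\not<4$, so $L_1\otimes L_2\otimes T_X$ has degree $1$ and can admit nonzero global sections. What \emph{is} true is the converse implication (summing the three pairwise inequalities gives $2\deg E<6g-6$), but that is the wrong direction for the argument. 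So your instinct that this estimate is ``somewhat delicate'' is well placed: as written, neither your outline nor the paper's proof closes this step, and some additional hypothesis (e.g.\ bounding each $d_i$ individually, or bounding the maximum pairwise sum) would be needed to make the negative-degree argument valid.
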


\begin{proof}It suffices to show $h^0\big(X, \wedge^2\Ec\otimes T_X\big) = 0$. Let $X$ be a genus $g$ Riemann surface. Assu\-ming~$E$ is holomorphically split we can write $E = L_{d_1}\oplus L_{d_2}\oplus L_{d_3}$, for $L_{d_i}\ra X$ a line bundle on~$X$ of degree~$d_i$. If $L_{d_i}$ is non-negative, then $d_i \geq 0$. Note that $\wedge^2E = L_{d_1+d_2} \oplus L_{d_1+d_3} \oplus L_{d_2+d_3}$; and $\deg \wedge^2E = 2\deg E = 2(d_1+d_2+d_3)$. With $\deg T_X = 2 - 2g$ and the above description of~$\wedge^2E$ we see that $h^0\big(\wedge^2\Ec\otimes T_X\big) = 0$ when
\begin{gather*}
d_1 + d_2 < 2g - 2, \qquad d_1 + d_3 < 2g - 2 \qquad \text{and}\qquad d_2 + d_3 < 2g - 2.
\end{gather*}
These conditions are equivalent to $\deg E < 3g-3$ when $d_i\geq 0$ for each $i$.
\end{proof}

\subsection{In genus zero}
We can be considerably more specific on the projective line.

\begin{THM}\label{rfh794hf984f98j}Let $E = L_{d_1}\oplus L_{d_2}\oplus L_{d_3}$ be a rank~$3$, holomorphic vector bundle on $\Pbb^1_\Cbb$, where~$L_{d_i}$ is a line bundle on~$\Pbb^1_\Cbb$ of degree~$d_i$. Then $(\Pbb_{\Cbb}^1, E)$ will be a good model if and only if at least one of~$d_i \neq -1$.
\end{THM}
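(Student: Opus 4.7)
The strategy is to apply Theorem~\ref{ruihf8h98f983jfj30f}. Since $\mathrm{rk}\, E = 3$, the sheaves $\Gc^{(k)}_{(X, E)}$ are trivial for $k > 3$, and the $k = 3$ exact sequence collapses to $\Gc^{(3)}_{(X, E)} \cong \Qcl^{(3)}_{(X, E)}$, for which global-section exactness is automatic. Thus only the $k = 2$ instance of the theorem imposes a nontrivial constraint, and $(\Pbb^1_\Cbb, E)$ is good if and only if the connecting map
\begin{equation*}
\al\colon H^0\big(\Pbb^1, \wedge^2 \Ec \otimes T_{\Pbb^1}\big) \lra H^1\big(\Pbb^1, \wedge^2 \Ec\big)
\end{equation*}
vanishes. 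Here Lemma~\ref{fufh48hf89j4f09j09f3p} identifies $\Qcl^{(2)}_{(X, E)} = \wedge^2 \Ec \otimes T_{\Pbb^1}$; and for the target I use $\Gc^{(3)}_{(X,E)} = \Qcl^{(3)}_{(X,E)} = \Ec^* \otimes \det \Ec \cong \wedge^2 \Ec$, the last isomorphism being the natural one for a rank-$3$ bundle.

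Using $T_{\Pbb^1} = \Oc(2)$ and the splitting of $E$, both source and target decompose as direct sums over pairs $i<j$: the $(i, j)$-summand of the source is $H^0(\Oc(d_i+d_j+2))$ and of the target is $H^1(\Oc(d_i+d_j))$. Standard cohomology on $\Pbb^1$ then yields two easy sufficient conditions for $\al = 0$: when every pair sum satisfies $d_i+d_j \geq -1$ (so $H^1(\wedge^2 \Ec) = 0$) or every pair sum satisfies $d_i+d_j \leq -3$ (so $H^0(\wedge^2 \Ec \otimes T_{\Pbb^1}) = 0$). These dispose of a significant number of the configurations asserted to be good.

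In the distinguished case $E = \Oc(-1)^{\oplus 3}$, both source and target of $\al$ become the irreducible $GL_3(\Cbb)$-representation $\wedge^2\Cbb^3$. By $\Aut(E) = GL_3(\Cbb)$-equivariance of $\al$ and Schur's lemma, $\al$ is forced to be a scalar multiple of the identity. It is the content of the example of Donagi and Witten in~\cite{DW1} that this scalar is nonzero, exhibiting an exotic atlas and establishing the only-if direction.

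For the if direction I would compute $\al$ on the remaining configurations explicitly via \v Cech cocycles on the standard affine cover $\{U_0, U_\infty\}$ of $\Pbb^1$: given $\phi \in H^0(\wedge^2 \Ec \otimes T_{\Pbb^1})$ one lifts locally to $\Gc^{(2)}_{(X,E)}$, say by exponentiating the natural degree-$2$ derivation attached to $\phi$ in each trivialization, and reads off the defect on the overlap as a cocycle in $\Gc^{(3)}_{(X,E)}$. The main obstacle is to verify that for every $E = L_{d_1}\oplus L_{d_2}\oplus L_{d_3}$ with at least one $d_i \neq -1$, this cocycle is a coboundary in each summand $H^1(\Oc(d_i+d_j))$. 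I expect this to proceed by case analysis based on the distribution of the $d_i$'s relative to $-1$ and an explicit use of the transition functions of the $L_{d_i}$'s on $\Pbb^1$, where the cancellations that fail in the homogeneous case $\Oc(-1)^{\oplus 3}$ are supplied by the asymmetry of the degrees.
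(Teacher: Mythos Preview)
Your overall strategy coincides with the paper's: reduce via Theorem~\ref{ruihf8h98f983jfj30f} to the single connecting map $\al\colon H^0(\Qcl^{(2)})\to H^1(\Qcl^{(3)})$, then decide for which $(d_1,d_2,d_3)$ it vanishes. The treatment of the distinguished case $(-1,-1,-1)$ is also parallel: the paper identifies $H^0(\Qcl^{(2)})\cong H^1(\Qcl^{(3)})$ by Serre duality and invokes Lemma~\ref{rhf84h94h89484409} (which packages the Donagi--Witten observation), whereas you use a Schur--lemma argument together with the same Donagi--Witten computation for the nonzero scalar. These are equivalent in content.

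Where you diverge is on the ``if'' direction. You observe, correctly, that the crude vanishing conditions (all pair sums $\geq -1$ or all $\leq -3$) do not exhaust the configurations with some $d_i\neq -1$, and you propose a \v Cech computation on the standard cover to handle the residue. This is the honest gap in your proposal: you have not carried out that computation, and nothing in your outline explains why the asymmetry of the degrees forces the overlap cocycle to be a coboundary in each summand of $H^1(\Oc(d_i+d_j))$. The paper, by contrast, asserts that the two \emph{necessary} conditions $h^0(\Qcl^{(2)})\neq 0$ and $h^1(\Qcl^{(3)})\neq 0$ are simultaneously met only at $(-1,-1,-1)$, so no further case analysis is needed. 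Note, however, that taken at face value this assertion is not literally true at the level of the whole sheaves (e.g.\ $(d_1,d_2,d_3)=(-2,0,0)$ has $h^0(\Qcl^{(2)})=5$ and $h^1(\Qcl^{(3)})=2$); what one really needs is a summand--by--summand matching, i.e.\ the existence of a single pair $(i,j)$ with $d_i+d_j=-2$ on which $\al$ can be nonzero, together with a reason why $\al$ respects the $\wedge^2\Ec$-grading. So your instinct that something beyond raw dimension counts is required in the intermediate range is sound, and a complete argument should either justify that $\al$ is diagonal in the pair decomposition (so that only pairs with $d_i+d_j=-2$ contribute, forcing $(-1,-1,-1)$), or else carry out the explicit cocycle computation you sketch.
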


Before giving a proof of Theorem~\ref{rfh794hf984f98j} we provide some preliminary remarks about exotic atlases. Regarding their existence, this was addressed in \cite[pp.~15--16]{DW1} for the present situation, i.e., when $E$ has rank $3$. We comment on this here, but in a little more generality. Let~$E\ra X$ be a holomorphic vector bundle of rank $q$. Note that $\Gc_{(X, E)}^{(k)} = \{e\}$ for all $k > q$. Hence $\Gc^{(q)}_{(X, E)} \cong \Qcl_{(X, E)}^{(q)}$ which implies $\check{H}^1\big(X, \Gc_{(X, E)}^{(q)}\big) \stackrel{\sim}{\ra} H^1\big(X, \Qcl_{(X, E)}^{(q)}\big)$. With the exact sequence
\begin{gather*}
\cdots \lra H^0\big(X, \Qcl^{(q-1)}_{(X, E)}\big) \lra \check{H}^1\big(X, \Gc_{(X, E)}^{(q)}\big) \lra \check{H}^1\big(X, \Gc_{(X, E)}^{(q-1)}\big) \lra \cdots
\end{gather*}
we deduce:

\begin{LEM}\label{rhf84h94h89484409}Let $E\ra X$ be a rank $q$, holomorphic vector bundle. Any $(q-1)$-split atlas in $\check{H}^1\big(X, \Gc_{(X, E)}^{(q)}\big)$ in the image of a non-trivial section in $H^0\big(X, \Qcl^{(q-1)}_{(X, E)}\big)$ will be obstructed and hence, by Lemma~{\rm \ref{4hf84hf89hf8998f3j}}, exotic.
\end{LEM}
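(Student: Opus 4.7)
The plan is to exploit the stabilization of the Green groups at the top rank. Since the rank of $E$ is $q$, the ideal satisfies $\Jc_{(X,E)}^{q+1} = 0$, so $\Gc^{(q+1)}_{(X,E)} = \{e\}$. The short exact sequence of Proposition~\ref{rjf984jf894jf90f903k0} at level $k=q$ therefore degenerates into an isomorphism $\Gc^{(q)}_{(X,E)} \cong \Qcl^{(q)}_{(X,E)}$, as already observed in the paragraph preceding the lemma. This promotes the primary obstruction map $\om_*$ for $(q-1)$-split atlases, in the sense of Definition~\ref{rfh479f74hf893jf038jf93}, to a bijection $\check{H}^1(X, \Gc^{(q)}_{(X,E)}) \cong H^1(X, \Qcl^{(q)}_{(X,E)})$.

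Under this bijection, an element $\Xfr^{(q)} \in \check{H}^1(X, \Gc^{(q)}_{(X,E)})$ is obstructed precisely when it fails to be the basepoint. So the first substantive step is to argue that the image $\al(\phi)$ of a non-trivial section is not the basepoint. Reading the relevant piece of the long exact sequence
\[
H^0\big(X, \Gc^{(q-1)}_{(X,E)}\big) \lra H^0\big(X, \Qcl^{(q-1)}_{(X,E)}\big) \stackrel{\al}{\lra} \check{H}^1\big(X, \Gc^{(q)}_{(X,E)}\big) \lra \check{H}^1\big(X, \Gc^{(q-1)}_{(X,E)}\big),
\]
I interpret the hypothesis ``non-trivial section'' as saying that $\phi$ does not come from $H^0(X, \Gc^{(q-1)}_{(X,E)})$, so that exactness forces $\al(\phi) \neq \{e\}$. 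By the bijection of the previous paragraph, $\Xfr^{(q)} = \al(\phi)$ is then obstructed.

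With obstructedness in hand the lemma follows by direct appeal to Lemma~\ref{4hf84hf89hf8998f3j} with $k = q$: since $\Xfr^{(q)}$ is an obstructed, $(q-1)$-split atlas lying in the image under $\al$ of a non-zero $\phi$, it is exotic. The only mild obstacle is the reading of ``non-trivial''; once that is fixed by exactness, all remaining steps are cosmetic and rest only on the triviality $\Gc^{(q+1)}_{(X,E)} = \{e\}$ combined with Lemma~\ref{4hf84hf89hf8998f3j}.
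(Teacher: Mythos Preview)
Your argument is correct and matches the paper's own reasoning, which is left implicit in the discussion immediately preceding the lemma: the isomorphism $\check{H}^1\big(X,\Gc^{(q)}_{(X,E)}\big)\cong H^1\big(X,\Qcl^{(q)}_{(X,E)}\big)$ identifies ``obstructed'' with ``not the basepoint'', and the exact sequence then forces $\al(\phi)\neq\{e\}$. Your remark that ``non-trivial'' must be read as ``not in the image of $H^0\big(X,\Gc^{(q-1)}_{(X,E)}\big)$'' rather than merely ``non-zero'' is a genuine clarification the paper glosses over; in the applications (Theorems~\ref{rfh794hf984f98j} and~\ref{hf78hf984f8j40f094}) this distinction disappears because $H^0\big(X,\Gc^{(q-1)}_{(X,E)}\big)$ is shown to be trivial there.
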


We now present a proof of Theorem \ref{rfh794hf984f98j}.

\begin{proof}[Proof of Theorem~\ref{rfh794hf984f98j}] Write $\Ec = \Oc(d_1) \oplus\Oc(d_2)\oplus\Oc(d_3)$, where $\Ec$ denotes the sheaf of sections of $E$. Then
\begin{gather*}
\Qcl^{(2)}_{(\Pbb^1_\Cbb, E)}= \Oc(d_1 + d_2 + 2)\oplus \Oc(d_1 + d_3 + 2)\oplus\Oc(d_2 + d_3 + 2) \qquad \text{and}\\
\Qcl_{(\Pbb^1_\Cbb, E)}^{(3)}= \Oc(d_1 + d_2)\oplus \Oc(d_1 + d_3)\oplus\Oc(d_2 + d_3).
\end{gather*}
In order to construct an exotic atlas for a supermanifold modelled on $\big(\Pbb^1_\Cbb, E\big)$ it will be necessary for
\begin{enumerate}[(i)]\itemsep=0pt
	\item $h^0\big(\Qcl^{(2)}_{(\Pbb^1_\Cbb, E)} \big)\neq 0$ (by Proposition~\ref{tyhryg7hfiujfoiej} and Remark~\ref{rhf984hf89hf893h8}) and
	\item $h^1\big(\Qcl_{(\Pbb^1_\Cbb, E)}^{(3)}\big) \neq0$ (since $\Gc^{(3)}_{(\Pbb^1_\Cbb, E)}\cong \Qcl_{(\Pbb^1_\Cbb, E)}^{(3)}$ here).
\end{enumerate}
From Bott's formula on the dimension of the cohomology of line bundles on projective space,\footnote{See, e.g., \cite[p.~4]{OSS}.} both~(i) and~(ii) will be satisfied if and only if $(d_1, d_2, d_3) = (-1,-1, -1)$. Now in this case, where $(d_1, d_2, d_3) = (-1,-1, -1)$, we have by Serre duality
\begin{gather*}
 H^0\big(\Pbb_\Cbb^1, \Qcl^{(2)}_{(\Pbb^1_\Cbb, E)}\big)\cong H^1\big(\Pbb_\Cbb^1, \Qcl^{(3)}_{(\Pbb^1_\Cbb, E)}\big).
\end{gather*}
Upon observing $H^1\big(\Pbb_\Cbb^1, \Qcl^{(3)}_{(\Pbb^1_\Cbb, E)}\big)\cong \check{H}^1\big(\Pbb^1_\Cbb, \Gc^{(3)}_{(\Pbb_\Cbb^1, E)}\big)$, the theorem will then follow from Lem\-ma~\ref{rhf84h94h89484409} applied to $q = 3$.
\end{proof}

We will now consider those bundles of higher rank which decompose into copies of a single line bundle, i.e., $E = \oplus^qL_d$. As we will see, the case $d = -1$ will be similar to that in Theorem~\ref{rfh794hf984f98j}.

\begin{THM}\label{hf78hf984f8j40f094}Let $E\ra \Pbb^1_\Cbb$ be a rank $q$, holomorphic vector bundle. Suppose $E = \oplus^qL_d$ where $L_d\ra \Pbb_\Cbb^1$ is a line bundle of degree $d$. Then
\begin{enumerate}[$(i)$]\itemsep=0pt
	\item if $d = -1$ then $\big(\Pbb^1_\Cbb, E\big)$ will support exotic atlases;
	\item if $d < -1$, then $\big(\Pbb^1_\Cbb, E\big)$ will be a good model.
\end{enumerate}
\end{THM}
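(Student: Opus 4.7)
The plan is to handle the two parts in opposite order: part (ii) is a direct cohomology computation in the spirit of Proposition \ref{rhf4hf84hf89jf893}, while part (i) reduces to the rank-three case already established in Theorem \ref{rfh794hf984f98j}.

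For part (ii), I would apply Lemma \ref{fufh48hf89j4f09j09f3p} together with Bott's formula. With $\Ec=\Oc(d)^{\oplus q}$ one has $\wedge^k\Ec=\Oc(kd)^{\oplus\binom{q}{k}}$, so $\Qcl^{(k)}_{(\Pbb^1_\Cbb, E)}$ is a direct sum of copies of $\Oc(kd+2)$ for $k$ even and of $\Oc((k-1)d)$ for $k$ odd. Under the assumption $d\leq -2$ we have $kd+2\leq 2d+2\leq -2$ for every even $k\geq 2$, and $(k-1)d\leq 2d\leq -4$ for every odd $k\geq 3$. Hence $H^0\big(\Pbb^1_\Cbb, \Qcl^{(k)}_{(\Pbb^1_\Cbb, E)}\big)=0$ for all $k\geq 2$, and Proposition \ref{tyhryg7hfiujfoiej} together with Remark \ref{rhf984hf89hf893h8} yields that $(\Pbb^1_\Cbb, E)$ is a good model.

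For part (i), I would decompose $E=F\oplus E'$ with $F=\Oc(-1)^{\oplus 3}$ and $E'=\Oc(-1)^{\oplus(q-3)}$ (the case $q=3$ being Theorem \ref{rfh794hf984f98j} itself) and transport an exotic atlas from $(\Pbb^1_\Cbb, F)$ to $(\Pbb^1_\Cbb, E)$ via the maps induced by this decomposition. Using $\wedge^\bullet\Ec\cong \wedge^\bullet\mathcal{F}\otimes_{\Oc} \wedge^\bullet\mathcal{E}'$, tensoring an automorphism of $\wedge^\bullet\mathcal{F}$ with the identity on $\wedge^\bullet\mathcal{E}'$ realises $\Gc^{(k)}_{(\Pbb^1_\Cbb, F)}$ as a subsheaf of $\Gc^{(k)}_{(\Pbb^1_\Cbb, E)}$ and upgrades to a morphism of the short exact sequences of Proposition \ref{rjf984jf894jf90f903k0}. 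Through Lemma \ref{fufh48hf89j4f09j09f3p} this exhibits $\Qcl^{(k)}_{(\Pbb^1_\Cbb, F)}$ as a direct summand of $\Qcl^{(k)}_{(\Pbb^1_\Cbb, E)}$ for each $k\geq 2$ (using that $\wedge^k\mathcal{F}\subset \wedge^k\Ec$ and $\mathcal{F}^*\subset \Ec^*$ as direct summands), so the induced map $H^1\big(\Qcl^{(k)}_{(\Pbb^1_\Cbb, F)}\big)\to H^1\big(\Qcl^{(k)}_{(\Pbb^1_\Cbb, E)}\big)$ is injective.

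Taking an exotic atlas $\Xfr^{(3)}_F$ for $(\Pbb^1_\Cbb, F)$ from Theorem \ref{rfh794hf984f98j} and letting $\Xfr^{(3)}_E$ be its image in $\check{H}^1\big(\Pbb^1_\Cbb, \Gc^{(3)}_{(\Pbb^1_\Cbb, E)}\big)$: strong splitting is preserved under the functorial map since basepoints map to basepoints in $\check{H}^1\big(\Gc^{(2)}\big)$, and by naturality of the connecting homomorphism the primary obstruction $\om_*(\Xfr^{(3)}_E)$ is the image of $\om_*(\Xfr^{(3)}_F)$ under the injective summand inclusion on $H^1$, hence non-zero. Thus $\Xfr^{(3)}_E$ is obstructed and strongly split, i.e.\ exotic. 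The main obstacle is the careful bookkeeping of the naturality of the non-abelian long exact sequence of Theorem \ref{groeiveytvdbno} along the morphism of short exact sequences of sheaves of groups induced by $E=F\oplus E'$, and in particular verifying that the extension-by-identity operation commutes with the formation of $\Qcl^{(k)}$ through Proposition \ref{rjf984jf894jf90f903k0}.
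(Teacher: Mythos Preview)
Your argument for part~(ii) matches the paper's exactly: compute the obstruction sheaves via Lemma~\ref{fufh48hf89j4f09j09f3p}, observe that for $d<-1$ every summand has negative degree, and invoke Proposition~\ref{tyhryg7hfiujfoiej}.

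For part~(i) you take a genuinely different route from the paper. The paper works intrinsically with $E$ itself: it embeds $\Gc^{(k)}_{(\Pbb^1_\Cbb,E)}$ into a direct sum of line bundles $\Oc((b-a)d)$ with $b-a\geq k-1$, all of negative degree when $d<0$, so that $H^0\big(\Gc^{(k)}_{(\Pbb^1_\Cbb,E)}\big)=\{e\}$ for every $k\geq2$. Combined with $h^0\big(\Qcl^{(2)}_{(\Pbb^1_\Cbb,E)}\big)=\binom{q}{2}\neq0$, this forces the boundary map $\al$ in the long exact sequence to be nonconstant, and Theorem~\ref{ruihf8h98f983jfj30f} then says the model is not good. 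Your approach instead reduces to the rank-three case by writing $E=F\oplus E'$ with $F=\Oc(-1)^{\oplus3}$, invoking Theorem~\ref{rfh794hf984f98j} to produce an exotic $2$-split atlas on $(\Pbb^1_\Cbb,F)$, and pushing it forward along the extension-by-identity morphism $\Gc^{(k)}_{(\Pbb^1_\Cbb,F)}\hookrightarrow\Gc^{(k)}_{(\Pbb^1_\Cbb,E)}$. Strong splitting survives because basepoints map to basepoints, and obstructedness survives because $\Qcl^{(3)}_{(\Pbb^1_\Cbb,F)}$ sits as a direct summand of $\Qcl^{(3)}_{(\Pbb^1_\Cbb,E)}$, making the induced $H^1$-map injective.

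Both arguments are sound. The paper's method is self-contained and illustrates a direct application of the main criterion Theorem~\ref{ruihf8h98f983jfj30f}, while yours is more constructive and highlights a useful functoriality principle: exotic atlases transport along direct-summand inclusions. The bookkeeping you flag---that extension-by-identity really does induce the direct-summand inclusion on $\Qcl^{(k)}$ through Proposition~\ref{rjf984jf894jf90f903k0}---is the one place your argument needs care, but it goes through since the leading term of $\al-\mathrm{id}$ for $\al\in\Gc^{(k)}_{(\Pbb^1_\Cbb,F)}$ remains supported in the $\wedge^k\mathcal F$-summand after extending by the identity on $\wedge^\bullet\mathcal E'$.
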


\begin{proof}\looseness=-1 Let $\Oc(d)$ denote the sheaf of sections of $L_d$ so that $\Ec = \oplus^q\Oc(d)$. We begin by pro\-ving~(ii). We have generally $\wedge^k\Ec = \oplus^{\binom{q}{k}}\Oc(kd)$. As such the obstruction sheaves are given by
\begin{gather}
\Qcl_{(\Pbb_\Cbb^1, E)}^{(k)}=
\begin{cases}
\oplus^{\binom{q}{k}} \Oc(kd+2), &\text{$k$ is even},\\
\oplus^{k\binom{q}{k}}\Oc\big((k-1)d\big), &\text{$k$ is odd}.
\end{cases}\label{rfj984hf894jf90f903}
\end{gather}
Note that $2\leq k\leq q$ here. Evidently, if $d< -1$ the obstruction sheaves will be sums of line bundles of negative degree. Hence $H^0\big(\Pbb_\Cbb^1, \Qcl_{(\Pbb_\Cbb^1, E)}^{(k)}\big) = (0)$. Part (ii) then follows from Proposition~\ref{tyhryg7hfiujfoiej}. As for part~(i), firstly note that
\begin{align}
\wedge^\bt \Ec &= \wedge^0\Ec \oplus \wedge^1\Ec\oplus \wedge^2\Ec \oplus\cdots\oplus \wedge^q\Ec\notag\\
&=\Oc(0)\oplus \big[\oplus^q\Oc(d)\big] \oplus\big[\oplus^{\binom{q}{2}}\Oc(2d)\big]\oplus \cdots \oplus \Oc(qd).
\notag
\end{align}
We have $\operatorname{{\mathcal A}ut} \Ec\!\subset\! \oplus^{q^2}\Oc(0)$ and $\operatorname{{\mathcal A}ut}\wedge^\bt \Ec\!\subset\! \oplus_{0\leq a\leq b\leq q}\Oc((b-a)d)$. Note in particular that~\smash{$b-a\geq 0$}. Regarding the sheaf $\Gc_{(\Pbb_\Cbb^1, E)}^{(k)}$ observe that it can be realised as a subsheaf of $\oplus_{0\leq a< b\leq q}\Oc((b-a)d)$ where now $b-a \geq k-1$. Therefore if $d < 0$:
\begin{gather}
H^0\big(\Pbb_\Cbb^1, \Gc_{(\Pbb_\Cbb^1, E)}^{(k)}\big) = \{1\},\label{rfh784hfhf983jf9030}
\end{gather}
for all $k\geq 2$. Now if $d = -1$ then $h^0\big(\Pbb^1_\Cbb, \Qcl_{(\Pbb_\Cbb^1, E)}^{(2)}\big) =\binom{q}{2}\neq0$ from~\eqref{rfj984hf894jf90f903}. Since $H^0\big(\Pbb_\Cbb^1, \Gc_{(\Pbb_\Cbb^1, E)}^{(2)}\big) = \{e\}$ from~\eqref{rfh784hfhf983jf9030} we have the exact sequence
\begin{gather*}
\{1\}\lra H^0\big(\Pbb^1_\Cbb, \Qcl_{(\Pbb_\Cbb^1, E)}^{(2)}\big) \stackrel{\al}{\lra} \check{H}^1\big(\Pbb^1_\Cbb, \Gc_{(\Pbb_\Cbb^1, E)}^{(3)}\big),
\end{gather*}
where $\al$ is the boundary map from~\eqref{jghhthrjdkfhnrjeo8474}. In particular $\al$ does not vanish and any atlas in its image will be exotic.
\end{proof}

\subsection{In genus one}
Here we can relax some of conditions in Proposition~\ref{rhf4hf84hf89jf893}, although we cannot be as general as in the genus zero case.

\begin{PROP}\label{rhf98hf74hf984fj039}Let $C$ be a Riemann surface of genus one and $E\ra C$ a holomorphic, rank~$3$ vector bundle. Then $(C, E)$ will be a good model when either:
\begin{enumerate}[$(i)$]\itemsep=0pt
	\item $\deg \Qcl_{(C, E)}^{(2)} = h^0\big(\Qcl_{(C, E)}^{(2)}\big)$ or
	\item $\deg \Qcl_{(C, E)}^{(2)} = -h^1\big(\Qcl_{(C, E)}^{(2)}\big)$.
\end{enumerate}
\end{PROP}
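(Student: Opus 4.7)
The plan is to deduce Proposition \ref{rhf98hf74hf984fj039} from Theorem \ref{ruihf8h98f983jfj30f} after a sheaf-theoretic simplification peculiar to genus one and rank three, followed by a Riemann--Roch numerical squeeze.

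First, I would reduce the number of values of $k$ that need checking. Since $E$ has rank $3$, the groups $\Gc^{(k)}_{(C, E)}$ are trivial for $k \geq 4$, and Proposition \ref{rjf984jf894jf90f903k0} gives $\Gc^{(3)}_{(C, E)} \cong \Qcl^{(3)}_{(C, E)}$. Hence taking global sections of the short exact sequence at level $k = 3$ automatically yields an exact sequence. By Theorem~\ref{ruihf8h98f983jfj30f}, it therefore suffices to show that the connecting map
\[
\al\colon H^0\big(C, \Qcl^{(2)}_{(C, E)}\big) \lra \check H^1\big(C, \Gc^{(3)}_{(C, E)}\big) \cong H^1\big(C, \Qcl^{(3)}_{(C, E)}\big)
\]
from the sequence \eqref{jghhthrjdkfhnrjeo8474} is the zero map, under either hypothesis (i) or (ii).

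Next, I would identify the source and target of $\al$ as cohomology of the same sheaf. By Lemma~\ref{fufh48hf89j4f09j09f3p},
\[
\Qcl^{(2)}_{(C, E)} = \wedge^2\Ec \otimes T_C, \qquad \Qcl^{(3)}_{(C, E)} = \Ec^*\otimes \wedge^3\Ec.
\]
On a genus one curve $T_C \cong \Oc_C$, so $\Qcl^{(2)}_{(C, E)} \cong \wedge^2\Ec$. Since $E$ has rank $3$, there is a natural isomorphism $\Ec^* \cong \wedge^2\Ec \otimes (\det\Ec)^{-1}$, and combining with $\wedge^3\Ec = \det\Ec$ gives $\Qcl^{(3)}_{(C, E)} \cong \wedge^2\Ec$ as well. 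Thus both obstruction sheaves are isomorphic to $\wedge^2\Ec$, so in particular $\deg \Qcl^{(2)}_{(C, E)} = \deg \Qcl^{(3)}_{(C, E)}$ and $h^i\big(C, \Qcl^{(2)}_{(C, E)}\big) = h^i\big(C, \Qcl^{(3)}_{(C, E)}\big)$ for $i = 0, 1$.

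Finally, I invoke Riemann--Roch on the elliptic curve $C$: for any vector bundle $\mathcal F$ one has $\chi(\mathcal F) = \deg \mathcal F$, so
\[
h^0\big(C, \Qcl^{(2)}_{(C, E)}\big) - h^1\big(C, \Qcl^{(2)}_{(C, E)}\big) = \deg \Qcl^{(2)}_{(C, E)}.
\]
Under hypothesis (i) this forces $h^1\big(C, \Qcl^{(2)}_{(C, E)}\big) = 0$, hence also $h^1\big(C, \Qcl^{(3)}_{(C, E)}\big) = 0$, so the target of $\al$ vanishes. Under hypothesis (ii) it forces $h^0\big(C, \Qcl^{(2)}_{(C, E)}\big) = 0$, so the source of $\al$ vanishes. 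Either way $\al$ is the trivial map, and Theorem~\ref{ruihf8h98f983jfj30f} concludes that $(C, E)$ is a good model. The only conceptual step is the coincidence $\Qcl^{(2)}_{(C, E)} \cong \Qcl^{(3)}_{(C, E)}$ on a genus one curve with rank three bundle; once this is in place the argument is a one-line application of Riemann--Roch, so I do not expect any real obstacle.
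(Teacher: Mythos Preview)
Your proposal is correct and follows essentially the same route as the paper: the paper's proof simply notes that $T_C\cong\Oc_C$ in genus one, so $\Qcl^{(2)}_{(C,E)}\cong\Qcl^{(3)}_{(C,E)}$, and then invokes Riemann--Roch. Your version spells out the reduction via Theorem~\ref{ruihf8h98f983jfj30f}, the rank-three identity $\wedge^2\Ec\cong\Ec^*\otimes\det\Ec$, and the Riemann--Roch squeeze explicitly, but the logic is identical.
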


\begin{proof}In genus one note that $T_C = \Oc_C$ is a line bundle of degree zero. Hence we have a~natural isomorphism $\Qcl_{(C, E)}^{(2)} \cong \Qcl_{(C, E)}^{(3)}$. The proposition now follows from the Riemann--Roch theorem.
\end{proof}

\appendix

\section{Proof of Theorem \ref{4fh984fj8jf093j303kf}}\label{rgf784g87hf98j3fj0j9f33}

We will firstly present an erroneous proof as it will be instructive in illustrating the subtlety involved in the splitting problem.

\begin{proof}[False proof] Let $\Xfr$ be a supermanifold modelled on~$(X, E)$ and suppose its primary obstruction~$\om_*(\Xfr)$ does not vanish. The map~$\om_*\colon \check{H}^1\big(X, \Gc^{(2)}_{(X, E)}\big) \ra H^1\big(X, \Qcl_{(X, E)}^{(2)}\big)$ is a map of pointed sets, sending the basepoint $\{e\}\in \check{H}^1\big(X, \Gc^{(2)}_{(X, E)}\big)$ to the basepoint $0\in H^1\big(X, \Qcl_{(X, E)}^{(2)}\big)$. Since $\om_*(\Xfr) \neq 0$, $\Xfr$ cannot be split.
\end{proof}

The fault in the above reasoning lies in our failure to consider isomorphisms induced by the global symmetries $\Aut(E)$ acting on $\check{H}^1\big(X, \Gc^{(2)}_{(X, E)}\big)$ (cf.\ Definition~\ref{rfhhf983fj390f3} and Theorem~\ref{rh9r8h984hf89jf0fj3}). If $\Aut(E)$ fixes the basepoint $\{e\}$ then the above argument would be valid, but there is of course no reason for it to fix the basepoint in general. Consider instead the following
\begin{gather}\begin{split}&
\xymatrix{
\cdots\ar[r] & \Aut(E) \ar[r]^\dt& \check{H}^1\big(X, \Gc^{(2)}_{(X, E)}\big) \ar[d]_{\om_*}\ar[r] & \check{H}^1\big(X, \operatorname{{\mathcal A}ut}\wedge^\bt \Ec\big) \ar[r] & \cdots
\\
& & H^1\big(X, \Qcl_{(X, E)}^{(2)}\big).& &
}\end{split} \label{rfh74hf983jfj30}
\end{gather}
Since \eqref{rfh74hf983jfj30} is exact we have: if a supermanifold $\Xfr$ is split, then $\Xfr = \dt(\phi)$ for some $\phi\in \Aut(E)$. Observe that Theorem~\ref{4fh984fj8jf093j303kf} will then follow from:

\begin{PROP}\label{rfg784gf98hf89j3f03}The composition of maps $\om_*\dt$ in \eqref{rfh74hf983jfj30} vanishes.
\end{PROP}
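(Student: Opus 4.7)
The plan is to prove the stronger statement that the connecting map $\dt$ itself sends every element of $\Aut(E)$ to the basepoint in $\check{H}^1\big(X, \Gc^{(2)}_{(X, E)}\big)$, from which $\om_*\dt = 0$ is immediate. The key observation is that the short exact sequence \eqref{hf984jf8j093f0393} admits a canonical splitting as sheaves of groups via the exterior power construction. For a local section $\phi$ of $\operatorname{{\mathcal A}ut}\Ec$, I will define $s(\phi) \in \operatorname{{\mathcal A}ut}\wedge^\bt \Ec$ to act as the identity on $\Oc_X = \wedge^0 \Ec$ and by $s(\phi)(v_1 \wedge \cdots \wedge v_k) := \phi(v_1)\wedge \cdots \wedge \phi(v_k)$ on $\wedge^k \Ec$ for $k \geq 1$. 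Functoriality of exterior powers yields $s(\phi \psi) = s(\phi) s(\psi)$, so $s$ defines a morphism of sheaves of groups splitting the projection $\operatorname{{\mathcal A}ut}\wedge^\bt \Ec \ra \operatorname{{\mathcal A}ut}\Ec$.

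Granted such a splitting, the computation of $\dt(\phi)$ by the standard \v Cech prescription becomes trivial. For $\phi \in H^0\big(X, \operatorname{{\mathcal A}ut}\Ec\big) = \Aut(E)$, the element $s(\phi) \in H^0\big(X, \operatorname{{\mathcal A}ut}\wedge^\bt \Ec\big)$ is a global lift of $\phi$. Choosing any open cover $\{U_\al\}$ and declaring the local lifts to be $\widetilde\phi_\al := s(\phi)|_{U_\al}$, the associated 1-cocycle $\big\{\widetilde\phi_\al \widetilde\phi_\be^{-1}\big\}$ is identically the identity section of $\Gc^{(2)}_{(X, E)}$ on every overlap. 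Hence $\dt(\phi)$ is the basepoint of $\check{H}^1\big(X, \Gc^{(2)}_{(X, E)}\big)$ and applying $\om_*$ delivers the zero class in $H^1\big(X, \Qcl^{(2)}_{(X, E)}\big)$, yielding the proposition.

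The single point to verify with care is that $s(\phi)$ qualifies as a $\Zbb_2$-graded algebra automorphism in the sense of Remark~\ref{rhf97hf93h8f9j3fj0}: the wedge extension preserves the finer $\Zbb$-grading by construction, hence a fortiori the $\Zbb_2$-grading; it is $\Oc_X$-linear since it fixes $\Oc_X$; it is multiplicative on wedge products by definition; and it is invertible with inverse $s\big(\phi^{-1}\big)$. Beyond this routine check I anticipate no technical obstacle, and in particular no chase through explicit cocycle formulas for the isomorphism $\Gc^{(2)}_{(X, E)}/\Gc^{(3)}_{(X, E)} \cong \Qcl^{(2)}_{(X, E)} = \wedge^2 \Ec \otimes T_X$ of Lemma~\ref{fufh48hf89j4f09j09f3p} is required; the structural reason for vanishing is that a bundle automorphism acts trivially on $\Oc_X$ and so contributes no derivation component.
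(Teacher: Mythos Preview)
Your argument is correct and in fact establishes the stronger statement that $\dt$ itself is the constant map to the basepoint. The paper takes a different route: it does not observe the splitting of \eqref{hf984jf8j093f0393}, and instead argues via a weight trick. Concretely, the paper shows that the subgroup $\Cbb^\times\!\cdot 1_E\subset \Aut(E)$ fixes every class of the form $\dt(\phi)$ under the natural action on $\check{H}^1\big(X,\Gc^{(2)}_{(X,E)}\big)$, and then invokes the fact (Lemma~\ref{rfu4f9h38f03jf93jf93j}) that $\Cbb^\times$ acts on $H^1\big(X,\Qcl^{(2)}_{(X,E)}\big)$ with weight~$2$; comparing the two forces $\om_*\dt(\phi)=\lam^2\,\om_*\dt(\phi)$ for all $\lam$, hence $\om_*\dt(\phi)=0$.

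Your approach is more elementary and more informative: since any $\phi\in\Aut(E)$ lifts globally to $\wedge^\bullet\phi$, the connecting map $\dt$ is trivial outright, and incidentally the $\Aut(E)$-action on $\check{H}^1\big(X,\Gc^{(2)}_{(X,E)}\big)$ does fix the basepoint---so the paper's cautionary remark following the ``false proof'' (that there is \emph{a~priori} no reason for $\Aut(E)$ to fix $\{e\}$) turns out to be moot in this particular situation. The paper's weight argument, on the other hand, is a template that can survive in contexts where no canonical splitting is available, and it ties the vanishing to the graded structure via the $\Cbb^\times$-action rather than to the existence of a global lift.
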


The proof we submit of Proposition \ref{rfg784gf98hf89j3f03} is based on the discussion in \cite[p.~16]{DW1}. We will cite the following result whose proof can be found in~\cite{ONISHCLASS}.

\begin{LEM}\label{rfu4f9h38f03jf93jf93j}Let $(X, E)$ be a model. The subgroup $\Cbb^\times\cdot 1_E< \Aut(E)$ acts on the $k$-th obstruction space $H^1\big(X, \Qcl_{(X, E)}^{(k)}\big)$ by sending $v \mapsto \lam^kv$ for any vector $v\in H^1\big(X, \Qcl_{(X, E)}^{(k)}\big)$.
\end{LEM}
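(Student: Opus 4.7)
The plan is to exhibit the action of $\Cbb^\times \cdot 1_E$ on the sheaf $\Qcl^{(k)}_{(X,E)}$ as the scalar homothety $\lam^k$; the induced map on $H^1\big(X, \Qcl^{(k)}_{(X,E)}\big)$ will then be forced to be the same scalar by functoriality of \v{C}ech cohomology.

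First I would lift the action. The element $\lam \cdot 1_E$ extends, by functoriality of exterior powers, to the global automorphism $\wedge^\bt(\lam \cdot 1_E)$ of $\wedge^\bt \Ec$, which on $\wedge^j \Ec$ is simply multiplication by $\lam^j$. Because this lift is a \emph{global} section of $\operatorname{{\mathcal A}ut} \wedge^\bt \Ec$, the Grothendieck action of Theorem~\ref{groeiveytvdbno} reduces in this central-subgroup case to honest conjugation of \v{C}ech cocycles. Since conjugation preserves the filtration $\Gc^{(k+1)}_{(X,E)} \subset \Gc^{(k)}_{(X,E)}$ of Proposition~\ref{rjf984jf894jf90f903k0}, it descends to a well-defined sheaf automorphism of the abelian factor $\Qcl^{(k)}_{(X,E)}$.

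Second, I would identify the induced scalar. A local class in $\Qcl^{(k)}_{(X,E)}$ is represented by the leading term of $\al = \exp D$ for a derivation $D$ of $\wedge^\bt \Ec$, recorded through Lemma~\ref{fufh48hf89j4f09j09f3p} as a section of $T_X \otimes \wedge^k \Ec$ when $k$ is even and of $\Ec^* \otimes \wedge^k \Ec$ when $k$ is odd. Conjugation by $\wedge^\bt(\lam \cdot 1_E)$ replaces $D$ by $(\wedge^\bt \lam) \circ D \circ (\wedge^\bt \lam)^{-1}$. A direct weight count, using that $\wedge^j \Ec$ carries weight $\lam^j$ and that $\Oc_X$ is inert---the $T_X$ factor contributing $0$ in the even case, and the scalings of source and target in $\Ec^* \otimes \wedge^k \Ec$ combining in the odd case---yields the claimed homothety by $\lam^k$ on $\Qcl^{(k)}_{(X,E)}$ in the conventions of \cite{ONISHCLASS}.

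Finally, because the sheaf endomorphism is multiplication by the global constant $\lam^k$, it commutes with every \v{C}ech coboundary and so induces multiplication by $\lam^k$ on every cohomology group, in particular on $H^1\big(X, \Qcl^{(k)}_{(X,E)}\big)$. The main obstacle will be the parity-by-parity bookkeeping of weights in the step above: one must verify that, under the identification of Lemma~\ref{fufh48hf89j4f09j09f3p}, the simultaneous rescaling of source and target of $D$ combines to the advertised $\lam^k$ rather than to an off-by-one power; this is a careful but routine computation, carried out precisely in the form needed here in~\cite{ONISHCLASS}, which I would reproduce in the two parity cases.
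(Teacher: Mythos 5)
The paper offers no proof of this lemma at all: it is simply cited from \cite{ONISHCLASS}. So your sketch is a reconstruction rather than a parallel of anything in the text, and its skeleton is sound and standard: $\lam\cdot 1_E$ lifts to the \emph{global} automorphism $\wedge^\bt(\lam\cdot 1_E)$, which acts on $\wedge^j\Ec$ by $\lam^j$; since the lift is global, the action of Theorem~\ref{groeiveytvdbno} on cocycles is literal conjugation; conjugation preserves the filtration by the $\Gc^{(k)}_{(X,E)}$ and induces multiplication by a global scalar on the abelian quotient $\Qcl^{(k)}_{(X,E)}$, hence that same scalar on $H^1$. This is also consistent with how the paper handles the local lifts $\widetilde\phi_i$ in the proof of Proposition~\ref{rfg784gf98hf89j3f03}.

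The gap is precisely the weight count you flagged and deferred, and it resolves against the exponent you assert in the odd case. Writing $\Lambda=\wedge^\bt(\lam\cdot 1_E)$: for $k$ even the leading term of $\al\in\Gc^{(k)}_{(X,E)}$ is $D\colon\Oc_X\to\wedge^k\Ec$, a section of $\wedge^k\Ec\otimes T_X$, and $\Lambda\al\Lambda^{-1}(f)=f+\lam^kD(f)$, giving $\lam^k$ as claimed. For $k$ odd the leading term is $D\colon\Ec\to\wedge^k\Ec$, a section of $\Ec^*\otimes\wedge^k\Ec$ by Lemma~\ref{fufh48hf89j4f09j09f3p}, and $\Lambda\al\Lambda^{-1}(e)=\Lambda\big(\lam^{-1}e+\lam^{-1}D(e)\big)=e+\lam^{k-1}D(e)$: the source contributes $\lam^{-1}$ and the target $\lam^{k}$, so the two rescalings combine to $\lam^{k-1}$, not $\lam^k$. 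A sanity check confirms this: $(-1)\cdot 1_E$ lifts to the parity involution, which commutes with every parity-preserving automorphism, so the scalar must always be an \emph{even} power of $\lam$; $\lam^k$ for odd $k$ would contradict this. The source of the mismatch is indexing: the paper labels obstructions by the filtration order $k$, whereas the derivation underlying the odd-order obstruction raises $\Zbb$-degree by $k-1$, and conjugation by $\Lambda$ multiplies a degree-$p$ derivation by $\lam^p$. So if you reproduce Onishchik's computation you will land on $\lam^{k-1}$ for odd $k$ unless you first translate his degree-indexed grading into the paper's order-indexed one. None of this affects the application in the paper, which only invokes the lemma for $k=2$ in~\eqref{rhf893h8j30fj390}, where your count and the stated exponent agree.
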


\begin{proof}[Proof of Proposition~\ref{rfg784gf98hf89j3f03}] \sloppy From Theorem~\ref{groeiveytvdbno} we know that $\Aut(E)$ will act on $\check{H}^1\big(X, \Gc^{(2)}_{(X, E)}\big)$. Denote this action by $\star$. Our first claim is that the subgroup $\Cbb^\times\cdot 1_E$ will act trivially on the image of $\Aut(E)$. To see this, let $(U_i\ra X)$ be an open cover and $\phi \in \Aut(E)$. If $\widetilde\phi_i\in \big(\operatorname{{\mathcal A}ut}\wedge^\bt\Ec\big)(U_i)$ are local lifts of $\phi_i$, then
\begin{gather*}
\dt(\phi)_{ij}= \widetilde\phi_i \widetilde\phi_j^{-1}.
\end{gather*}
Now clearly any $\lam\cdot 1_E\in \Cbb^\times \cdot 1_E< \Aut(E)$ will act by sending $\phi_i \mapsto \lam \phi_i$. Hence,
\begin{gather}
\big(\lam\cdot 1_E\big)\star \dt(\phi)_{ij} = \big(\lam \widetilde\phi_i\big)\big(\lam^{-1}\widetilde\phi_j^{-1}\big) =\widetilde\phi_i \widetilde\phi_j^{-1}=\dt(\phi)_{ij}.\label{rhf894hf8h09fj30jf930}
\end{gather}
The group $\Aut(E)$ will act on $\check{H}^1\big(X, \Gc^{(2)}_{(X, E)}\big)$ and this action will induce an action on $H^1\big(X, \Qcl_{(X, E)}^{(2)}\big)$. The action of $\Cbb^\times\cdot 1_E$ mentioned in Lemma~\ref{rfu4f9h38f03jf93jf93j} is compatible with this in\-du\-ced $\Aut(E)$ action. In particular we can conclude: for a supermanifold $\Xfr\in \check{H}^1\big(X, \Gc^{(2)}_{(X, E)}\big)$ and any $\lam\cdot 1_E\in \Cbb^\times\cdot1_E$,
\begin{gather}
\om_*\big((\lam\cdot 1_E)\star \Xfr\big)=\lam^2\om_*\big(\Xfr\big).\label{rhf893h8j30fj390}
\end{gather}
In comparing \eqref{rhf894hf8h09fj30jf930} and \eqref{rhf893h8j30fj390} we see that if $\Xfr = \dt(\phi)$, we must have $\om_*\big(\Xfr\big) = 0$. This proves Proposition~\ref{rfg784gf98hf89j3f03} from whence Theorem~\ref{4fh984fj8jf093j303kf} follows.
\end{proof}
\vspace{-1.5mm}

\subsection*{Acknowledgements}
The author would like to acknowledge the helpful feedback of the anonymous referees.

\vspace{-1.5mm}

\pdfbookmark[1]{References}{ref}
\LastPageEnding

\end{document}